\numberwithin{equation}{section}
\tikzset{sgplattice/.style={inner sep=1pt,norm/.style={red!50!blue},char/.style={blue!50!black},
  lin/.style={black!50}},cnj/.style={black!50,yshift=-2.5pt,left=-1pt of #1,scale=0.5,fill=white}}
\DeclareFontFamily{U}{mathb}{\hyphenchar\font45}
\DeclareFontShape{U}{mathb}{m}{n}{
      <5> <6> <7> <8> <9> <10> gen * mathb
      <10.95> mathb10 <12> <14.4> <17.28> <20.74> <24.88> mathb12
      }{}
\DeclareSymbolFont{mathb}{U}{mathb}{m}{n}
\DeclareMathSymbol{\righttoleftarrow}{3}{mathb}{"FD}
\theoremstyle{plain}
\newtheorem{prop}{Proposition}[section]
\newtheorem{theo}[prop]{Theorem}
\newtheorem{coro}[prop]{Corollary}
\theoremstyle{definition}
\newtheorem{rema}[prop]{Remark}
\newcommand{\actsfromleft}{\mathrel{\reflectbox{$\righttoleftarrow$}}}
\def\cE{{\mathcal E}}
\def\cO{{\mathcal O}}
\def\sA{{\mathsf A}}
\def\fA{{\mathfrak A}}
\def\fD{{\mathfrak D}}
\def\fS{{\mathfrak S}}
\def\fS{{\mathfrak S}}
\def\bA{{\mathbb A}}
\def\bG{{\mathbb G}}
\def\bP{{\mathbb P}}
\def\Aut{\mathrm{Aut}}
\def\GL{\mathsf{GL}}
\def\PGL{\mathsf{PGL}}
\def\lim{\mathrm{lim}}
\def\Sing{\mathrm{Sing}}
\begin{document}
\title[Equivariant geometry of cubic threefolds]{Equivariant geometry of cubic threefolds with non-isolated singularities}

\author[I. Cheltsov]{Ivan Cheltsov}
\address{Department of Mathematics, University of Edinburgh, UK}

\email{I.Cheltsov@ed.ac.uk}

\author[L. Marquand]{Lisa Marquand}
\address{
  Courant Institute,
  251 Mercer Street,
  New York, NY 10012, USA
}

\email{lisa.marquand@nyu.edu}

\author[Y. Tschinkel]{Yuri Tschinkel}
\address{
  Courant Institute,
  251 Mercer Street,
  New York, NY 10012, USA
}

\email{tschinkel@cims.nyu.edu}

\address{Simons Foundation\\
160 Fifth Avenue\\
New York, NY 10010\\
USA}

\author[Zh. Zhang]{Zhijia Zhang}

\address{
Courant Institute,
  251 Mercer Street,
  New York, NY 10012, USA
}

\email{zz1753@nyu.edu}

\date{\today}

\begin{abstract}
We study linearizability of actions of finite groups on cubic threefolds with non-isolated singularities.  
\end{abstract}

\maketitle

\section{Introduction}
\label{sect:intro}

Let $k$ be an algebraically closed field of characteristic zero, $X$ a smooth projective variety over $k$ of dimension $n$ and $G\subseteq \Aut(X)$ a subgroup of automorphisms. The $G$-action on $X$ is {\em linearizable} if it is equivariantly birational to a linear $G$-action on $\bP^n$, and  {\em unirational} if $X$ is $G$-equivariantly dominated by the projectivization of a linear representation of $G$.

In \cite{CTZ-burk}, \cite{CTZ-cubic}, \cite{CMTZ} and \cite{CTZ-uni}, we have addressed the problems of unirationality and linearizability of actions of finite groups on cubic threefolds with {\em isolated} singularities. 
In this paper, we extend the study of birational properties of generically free regular actions of finite groups on singular cubic threefolds to those with {\em non-isolated} singularities. 

Detailed knowledge of degenerations of cubic threefolds, together with their automorphisms, plays an important role in moduli theory, see, e.g., \cite{All}, \cite{CGH}. Indeed, the classification of isolated singularities in \cite{viktorova} was one of the motivations of our work. Another source of inspiration comes from arithmetic and birational geometry over nonclosed fields, as in \cite{CTS} or \cite{Colliot-P}.

We proceed to describe our strategy: we start by identifying all possibilities for actions
and the corresponding normal forms. Roughly, cubic threefolds with non-isolated singularities are of four types, according to the geometry of the singular locus:
line,  conic, plane, twisted quartic.
More precisely, 
denote by $\Sing(X)$ the singular locus of $X$. Suppose that $\dim(\Sing(X))\geq 1$, and $X$ is not a cone.   
We observe that the secant variety of $\Sing(X)$ is either $X$ or has dimension $\le 2$; otherwise, $X$ is reducible. 
Building on this, the possibilities of $\Sing(X)$ have been classified in \cite[Proposition 4.2]{yokocub}:
\begin{itemize}
    \item When $\mathrm{dim}(\Sing(X))\geq 2$, then $\Sing(X)$ is a plane.
    \item When $\mathrm{dim}(\Sing(X))=1$ and $\Sing(X)$ contains a plane curve, then the union of one-dimensional components of $\Sing(X)$ is either a line, a smooth conic, or two lines intersecting at a point.
    \item When $\mathrm{dim}(\Sing(X))=1$ and $\Sing(X)$ contains a curve not contained in a plane, then $\Sing(X)$
is a smooth rational normal quartic curve and there is a unique such $X$, known as the {\em chordal cubic}, given by 
$$
    x_1x_4^2 + x_2^2x_5 - x_1x_3x_5 - 2x_2x_3x_4 + x_3^3=0.
$$
\end{itemize}

We focus on actions not fixing singular points on $X$, since otherwise, the actions are linearizable.  The resulting cases are analyzed using a variety of tools from birational geometry, including the connection between equivariant geometry and geometry over nonclosed fields in \cite{DR}. We also restrict to finite group actions.
Our principal results are:
\begin{itemize} 
    \item All actions on cubics singular along a line are linearizable, by  Theorem~\ref{thm:line}. 
    \item All actions on the chordal cubic are linearizable, by Theorem~\ref{theo:chordalmain}.
    \item All actions on cubics singular along a plane are linearizable, by Theorem~\ref{thm:plane}.
    \item  Most actions on cubics singular along a conic are not linearizable, by Theorem~\ref{thm:12}. The linearizability problem of some actions in this case remains open; 
   see Section~\ref{sect:conic} for more details. 
\item All actions on cubic threefolds with non-isolated singularities are unirational, by Theorem~\ref{thm:lineuni1}. 
\end{itemize}

One of the corollaries of classifications in \cite{All}, \cite{viktorova}, \cite{yokocub}, \cite{LiuXu} is the following: if $X\subset \bP^4$ is a K-unstable rational cubic threefold with isolated singularities and $G\subseteq \Aut(X)$ is a finite subgroup, then the $G$-action on $X$ is linearizable. This is no longer true for cubic threefolds with non-isolated singularities -- there are non-linearizable $G$-actions on K-unstable cubic 3-folds singular along a conic; this is also the most interesting case in arithmetic considerations in \cite{Colliot-P}.

\

\noindent
{\bf Acknowledgments:} 
The first author was supported by Simons Collaboration grant \emph{Moduli of varieties}. He also thanks CIRM, Luminy, for its hospitality and for
providing a perfect work environment. The third author was partially supported by NSF grant 2301983.

\section{Tools from birational geometry}
\label{sect:gen}

We recall the basic terminology: a $G$-action on $\bP^n$ is called {\em linear} if it arises from projectivization $\bP(V)$ of a $G$-representation $V$. We {\em do not} assume that the action on $\bP(V)$ is generically free. 
We will use the following general results. 

\begin{prop}
\label{prop:25}
     Let $X=\bP_{B}(\cE)$ be the projectivization of a vector bundle $\cE$ of rank $n+1$ over a smooth projective irreducible variety $B$, and $\pi:X\to B$ the associated $\bP^n$-bundle.
    Assume that $X$ carries a regular action of a finite group $G$ such that $\cE$ is $G$-linearized, 
    and that the induced action on $B$ is unirational. Then the action on $X$ is $G$-unirational. 
    
    Moreover, if the $G$-action on $B$ is linearizable and generically free then $X$ is linearizable.   
\end{prop}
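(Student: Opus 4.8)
The plan is to reduce both statements to standard facts about projectivizations of $G$-linearized vector bundles and about "no-name"–type arguments. For the unirationality statement, since the induced $G$-action on $B$ is unirational, by definition there is a $G$-representation $W$ and a $G$-equivariant dominant rational map $\bP(W)\dashrightarrow B$. First I would pull back the bundle $\cE$ along this map: because $\cE$ is $G$-linearized, its pullback $\cE'$ to (a $G$-invariant open subset of) $\bP(W)$ is again a $G$-linearized vector bundle of rank $n+1$, and the fiber product gives a $G$-equivariant dominant rational map $\bP_{\bP(W)}(\cE')\dashrightarrow X$. So it suffices to show that $\bP_{\bP(W)}(\cE')$ is $G$-unirational. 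Now $\bP(W)$ is already the projectivization of a representation; on a $G$-invariant dense open $U\subseteq \bP(W)$ the bundle $\cE'$ becomes $G$-equivariantly trivial up to a twist, but more robustly one invokes the no-name lemma: a $G$-linearized vector bundle over a variety with a (generically free, or at least suitable) linear $G$-action becomes, after restriction to a dense open and passage to a $G$-equivariant vector-bundle birational modification, a product with a linear representation. This identifies $\bP_{\bP(W)}(\cE')$ $G$-birationally with an open subset of $\bP(W\oplus\cdots)$-type total space — concretely with a projective bundle over $\bP(W)$ that is $G$-birational to $\bP(\widetilde W)$ for a representation $\widetilde W$ — hence it is $G$-unirational, and composing dominant maps finishes the first claim.

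For the second statement, assume in addition that the $G$-action on $B$ is linearizable and generically free. Linearizable means there is a $G$-equivariant birational map $B\dashrightarrow \bP(V)$ for a representation $V$; replacing $B$ by $\bP(V)$ $G$-birationally (and $\cE$ by the transported $G$-linearized bundle, still of rank $n+1$, now defined over a dense open of $\bP(V)$), we are reduced to the case $B=\bP(V)$ with its linear, generically free $G$-action, and $\cE$ a $G$-linearized vector bundle over a dense open $U\subseteq \bP(V)$. Here I would apply the no-name lemma in its sharp form: since $G$ acts generically freely on $U$, the $G$-equivariant vector bundle $\cE|_U$ is $G$-equivariantly birational to $U\times k^{n+1}$ with $G$ acting trivially on the second factor (after shrinking $U$). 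Projectivizing, $X=\bP_B(\cE)$ is $G$-birational to $U\times\bP^n$ with trivial action on $\bP^n$, hence $G$-birational to $\bP(V)\times\bP^n$. It then remains to see that $\bP(V)\times\bP^n$, with $G$ acting linearly on the first factor and trivially on the second, is $G$-equivariantly birational to $\bP^{\dim V+n-1}$ with a linear $G$-action; this is the standard fact that the product of a linear action with a trivial projective space is linearizable (for instance via the $G$-equivariant birational map $\bP(V)\times\bP^n\dashrightarrow \bP(V^{\oplus(n+1)})$ coming from the Segre-type/affine-cone identification, or simply because $\bP(V)\times \bA^n$ is $G$-birational to affine space with a linear action). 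Chaining the $G$-birational maps yields linearizability of $X$.

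The main obstacle I anticipate is the careful application of the no-name lemma in the equivariant setting: one must ensure the $G$-action on the base (after linearizing) is genuinely generically free so that the $G$-linearized bundle trivializes $G$-equivariantly up to birational modification, and one must track that all the intermediate total spaces remain projective bundles of the correct rank $n+1$ under pullback and under the birational identification $B\dashrightarrow\bP(V)$. In the unirationality part the action on the relevant $\bP(W)$ need not be generically free, so there the no-name argument has to be run in the weaker "domination" form, where one only needs a $G$-equivariant dominant map from a projectivized representation rather than a birational one — this is precisely why the conclusion there is merely unirationality and not linearizability. Everything else is bookkeeping with compositions of $G$-equivariant (bi)rational maps.
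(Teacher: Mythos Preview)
Your approach is essentially the same as the paper's: pull back the $G$-linearized bundle along a dominant equivariant map from a projectivized representation, then invoke the no-name lemma. For the linearizability statement your argument is correct and in fact spells out what the paper compresses into a single citation of the projective no-name lemma.

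There is one genuine imprecision in your unirationality argument. You correctly flag that the $G$-action on the dominating $\bP(W)$ need not be generically free, but your proposed remedy---running the no-name lemma in a ``weaker domination form''---is not a standard statement, and the no-name lemma genuinely requires generic freeness on the base to conclude anything. The paper handles this point differently and more simply: it asserts at the outset that one may take the dominating $\bP(V)$ to carry a generically free $G$-action. This is the standard trick you are missing: given any $G$-equivariant dominant $\bP(W)\dashrightarrow B$, replace $W$ by $W\oplus k[G]$ (or any faithful representation); the linear projection $\bP(W\oplus k[G])\dashrightarrow\bP(W)$ is $G$-equivariant and dominant, and the $G$-action on $\bP(W\oplus k[G])$ is generically free. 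With this enlargement the ordinary no-name lemma applies verbatim and the rest of your argument goes through. The paper also resolves indeterminacy to obtain a regular map $\widetilde{\bP(V)}\to B$ before pulling back, so that $\tilde{\cE}$ is an honest vector bundle rather than one defined only over an open subset; this is bookkeeping and does not affect correctness.
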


\begin{proof}
By assumption, there exists a $G$-representation $V$ such that $G$ acts generically freely on $\bP(V)$, 
which in turn admits a dominant $G$-equivariant rational map to $B$. Equivariantly resolving indeterminacy of this map $\widetilde{\bP(V)}\to\bP(V)$, we obtain
the following $G$-equivariant diagram:
$$
\xymatrix{
\cE \ar[d]        & \ar@{->}[l] \tilde{\cE}  \ar[d]    \\ 
B      &  \ar@{->}[l] \widetilde{\bP(V)} 
}
$$
where $\tilde{\cE}$ is the pullback of $\cE$ to $\widetilde{\bP(V)}$.
By assumption, the $G$-action on $\bP_B(\cE)$ lifts to $\cE$. It follows that $\tilde{\cE}$ is 
a $G$-linearized vector bundle over $\widetilde{\bP(V)}$. By the no-name lemma, see, e.g., \cite[Theorem 1]{Kang}, $\tilde{\cE}$ is $G$-equivariantly birational to $\widetilde{\bP(V)}\times \bA^{n+1}$,
with trivial action on the second factor, 
as the $G$-action on $\widetilde{\bP(V)}$ is generically free. It follows that $X=\bP_B(\cE)$ is dominated by $\bP(W)$, for some $G$-representation $W$, thus is $G$-unirational. 

The second claim is a projective version of the no-name lemma, see, e.g., \cite[Theorem 1']{Kang}.
\end{proof}

\begin{rema} 
This proposition applies in particular to actions on products of projective spaces, e.g., $\bP^1\times \bP^2$ in Section~\ref{sect:line}, or to a nontrivial $\bP^1$-bundle over $\bP^2$, in Section~\ref{sect:chordal}. 
In general, e.g, when the $G$-action on the base $B$ is not generically free, or does not lift to $\cE$, the linearization problem remains a challenge. The linearizability of actions on $(\bP^1)^2$ was recently settled in \cite{pinardin}; the case of $(\bP^1)^3$ is open.
\end{rema}

The following proposition is a version of \cite[Propositions 3.1 and 3.5]{CTZ-uni}.

\begin{prop}
\label{prop:unirat}
Let $k$ be an algebraically closed field of characteristic 0. Let $X\subset \bP^n$, $n\ge 3$, be an irreducible cubic hypersurface over $k$ which is not a cone, and $G\subseteq \Aut(X)$ a finite subgroup, acting linearly on $\bP^n$. Assume that $X$ contains a $G$-invariant subvariety  $S$, which is $G$-unirational. Then $X$ is $G$-unirational.
\end{prop}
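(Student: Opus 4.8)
The plan is to exploit the classical construction of unirationality of cubic hypersurfaces via projection from a subvariety, made equivariant. Since $X$ is an irreducible cubic hypersurface that is not a cone and contains a $G$-invariant unirational subvariety $S$, I would first reduce to the case where $S$ is a point or, more usefully, where $S$ has small dimension: replacing $S$ by a $G$-invariant subvariety of it if necessary, I may assume $S$ is $G$-unirational of minimal dimension. The key geometric input is the standard fact that if $p\in X$ is a general point, the line through $p$ meeting $X$ with multiplicity $\ge 2$ at $p$ (the tangent lines in $X$) sweep out a surface, and more relevantly, given a point $s\in S$ and a general point $x\in X$, the line $\overline{sx}$ meets $X$ in a third point $x'$; the assignment $x \mapsto x'$ is a rational involution, but composing projections from two different points of $S$ gives the desired dominant map.

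Concretely, I would argue as follows. First, handle the case $\dim S = 0$: then $S$ is a single $G$-fixed point $p\in X$ (a one-point $G$-orbit), and projection from the tangent hyperplane construction, or rather the classical argument that a cubic with a rational point is unirational of degree $2$ over the projectivized tangent space, works $G$-equivariantly because $p$ and $T_pX$ are $G$-invariant; this realizes $\bP(T_pX) \cong \bP^{n-1}$ as $2$-to-$1$ dominated by $X$, hence $X$ is dominated by a $\bP^1$-bundle over $\bP^{n-1}$ — wait, more precisely one gets that $X$ is $G$-birational to a conic bundle over $\bP^{n-1}$ with a section, hence rational, so certainly $G$-unirational; and if the one-point orbit case fails we instead have $\dim S \ge 1$. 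For $\dim S \ge 1$: I would use the two-point projection. Take the incidence variety $\cI = \{(s_1, s_2, x) : s_i \in S, x \in X, x \in \overline{s_1 s_2}\}$, equipped with its natural $G$-action (where $G$ acts diagonally, permuting the two $S$-factors trivially since we take the product $S\times S$). For general $(s_1,s_2)$, the line $\overline{s_1 s_2}$ meets $X$ in a third point; this defines a $G$-equivariant dominant rational map $S\times S \dashrightarrow X$ provided the lines spanned by pairs of points of $S$ cover $X$, i.e. the secant variety of $S$ is $X$. Since $S\times S$ is $G$-unirational (product of $G$-unirational varieties, using the no-name-type argument or simply that a product of unirational $G$-varieties is $G$-unirational — this follows from Proposition~\ref{prop:25} applied to trivial bundles, or directly), this gives $G$-unirationality of $X$.

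The main obstacle is precisely the case where the secant variety of $S$ is \emph{not} all of $X$ — which, by the dimension count recalled in the introduction (the secant variety of the singular locus is either $X$ or has dimension $\le 2$, and similar reasoning applies here), forces $S$ to be small, e.g. contained in a plane or a line. In that situation the two-point construction degenerates and one must instead iterate: use points of $S$ together with general points of $X$. Given $x \in X$ general and $s \in S$, the line $\overline{sx}$ meets $X$ in a residual point $\phi_s(x)$; fixing finitely many points generating $S$ is not $G$-equivariant, so instead I would work with the family: consider $\cJ = \{(s, x) : s \in S,\ x \in X,\ \overline{sx} \not\subset X\} \to X$ via the residual-point map, producing a $G$-equivariant rational map $S \times X \dashrightarrow X$; one then needs that for generic $s$ this is dominant (equivalently birational, as it is an involution up to the choice), which reduces the problem to showing $S\times X$ is $G$-unirational — but $X$ itself is what we want to prove unirational, so this is circular unless handled carefully. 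The honest fix, following \cite[Propositions 3.1 and 3.5]{CTZ-uni}, is: the residual-point construction from a \emph{single} $G$-invariant $S$ together with projection realizes $X$ as $G$-birational to a quadric bundle (or conic bundle) over a projective bundle over $S\times S$ (or over $S$), whose total space is visibly $G$-unirational by Proposition~\ref{prop:25} once the base is, and $S\times S$ is $G$-unirational since $S$ is. I expect the bulk of the work, and the part requiring care, to be checking that the relevant secant/tangent construction is dominant and defined over the generic point $G$-equivariantly in each of the geometric configurations of $S$ allowed by the classification (line, conic, plane, twisted quartic), and reconciling the two cases $\operatorname{Sec}(S) = X$ versus $\operatorname{Sec}(S) \subsetneq X$; I would structure the proof by first proving the clean statement when $\operatorname{Sec}(S)=X$ via $S\times S \dashrightarrow X$, then treating the deficient case by the one-parameter residual construction $S \times X \dashrightarrow X$ and invoking the fibration version of Proposition~\ref{prop:25}.
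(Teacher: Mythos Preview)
Your approach attempts a direct equivariant geometric construction, but there is a genuine gap that you yourself flag and do not close. The secant construction $S\times S\dashrightarrow X$ via the residual intersection point fails in exactly the case needed for the application (Theorem~\ref{thm:lineuni1}): there $S=\Pi$ is a plane contained in $X$, so every line through two points of $S$ lies in $S\subset X$ and there is no residual point at all. More generally, whenever $S$ is a linear subspace of $X$ --- the situation singled out in the remark immediately following the proposition --- the secant variety of $S$ is $S$ itself. Your fallback, the residual map $S\times X\dashrightarrow X$, is circular as you acknowledge, and the ``honest fix'' you gesture toward, realizing $X$ as a quadric or conic bundle over something built from $S$, is not carried out and does not by itself yield $G$-unirationality without further input (a $G$-equivariant rational section, which you do not produce). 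The $\dim S=0$ case is also mishandled: projection from a \emph{smooth} $G$-fixed point $p$ is generically $2$-to-$1$ from $X$ onto $\bP^{n-1}$, not birational, and does not exhibit $X$ as a conic bundle with a section; making Koll\'ar's unirationality argument for cubics with a rational point run $G$-equivariantly is precisely the nontrivial content you are trying to establish.

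The paper sidesteps all of this by an entirely different and much shorter route. It invokes the criterion of \cite[Theorem~1.1]{DR}, which says that $X$ is $G$-unirational over $k$ if and only if every torsor-twist ${}^TX_K$ is unirational over $K$, for every field extension $K/k$ and every $G$-torsor $T$ over $K$. Each such twist is again a cubic hypersurface in $\bP^n_K$ (by \cite[Lemma~10.1]{DR}) containing the corresponding twist of $S$; since $S$ is $G$-unirational, its twist is $K$-unirational by \cite[Theorem~1.1]{DR} again, and in particular has a $K$-point, so ${}^TX_K$ has a $K$-point, whence ${}^TX_K$ is $K$-unirational by \cite{kollarcubic}. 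The entire difficulty of making the geometric construction equivariant is thus absorbed into the black box of \cite{DR}, and Koll\'ar's result is applied only in the non-equivariant setting over a field, where it is already available.
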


\begin{proof}
By \cite[Theorem 1.1]{DR}, $G$-unirationality of $X$ is equivalent to the following property: for every field  $K/k$ and every $G$-torsor $T$ over $K$, the twist ${}^TX_K$ of $X$ over $K$ via $T$ is $K$-unirational. Our assumption implies that all such twists are cubic hypersurfaces in $\bP^n_K$,
see \cite[Lemma 10.1]{DR}. Every such twist contains a twisted form of $S$, defined over $K$. By assumption and \cite[Theorem 1.1]{DR}, 
this twisted form of $S$ is unirational over $K$ and, in particular, 
has $K$-rational points. Then ${}^TX_K$ also has $K$-rational points and is $K$-unirational by \cite{kollarcubic}.
\end{proof}

\begin{rema}
This proposition applies in particular when $X$ contains a $G$-invariant linear subspace, e.g., a point, a line, or a plane; we use it in the proof of Theorem~\ref{thm:lineuni1}.      
\end{rema}

\section{Line}
\label{sect:line}

Let $X\subset \bP^4$ be an irreducible cubic threefold, singular along a line $\mathfrak l$.
If $\Sing(X)$ contains another positive-dimensional component, then the action of $\Aut(X)$ on $X$ is linearizable; indeed, the other component must be a line intersecting $\mathfrak l$ in a distinguished singular point.  
Hence we assume that $\mathfrak l$ is the only positive-dimensional component of $\Sing(X)$.
The main theorem of this section is the following.
\begin{theo}
\label{thm:line}
Let $X$ be a cubic threefold, singular along a line, 
and $G\subseteq \Aut(X)$ a finite subgroup. 
Then the $G$-action on $X$ is linearizable. 
\end{theo}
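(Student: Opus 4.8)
The plan is to put $X$ in a normal form, identify it $G$-equivariantly with $\bP^2\times\bP^1$ carrying a product action, and then apply Proposition~\ref{prop:25} with base $\bP^2$.

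\emph{Normal form.} Choose coordinates on $\bP^4$ so that $\mathfrak{l}=\{x_0=x_1=x_2=0\}$. Since $X$ is singular along $\mathfrak{l}$, its cubic equation has the shape
\[
F=C(x_0,x_1,x_2)+x_3\,Q_3(x_0,x_1,x_2)+x_4\,Q_4(x_0,x_1,x_2),
\]
with $C$ cubic and $Q_3,Q_4$ quadratic; as $X$ is irreducible and not a cone, $\langle Q_3,Q_4\rangle$ is a genuine pencil with finite base locus (otherwise $\Sing(X)$ would gain a second one-dimensional component or $X$ would split off a component). Lifting $G$ to $\widetilde G\subset\GL_5$ and applying Maschke's theorem to the invariant subspace $\langle x_3,x_4\rangle$, I may further assume $G$ acts block-diagonally, $g=\mathrm{diag}(A_g,B_g)$ with $A_g\in\GL_3$, $B_g\in\GL_2$. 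Then $G$ preserves the pencil $\langle Q_3,Q_4\rangle$, acting on it by the contragredient of its action $\rho_1\colon G\to\PGL_2$ on $\mathfrak{l}$, and $C$ is a $G$-semiinvariant for the action $\rho_2\colon G\to\PGL_3$ on the $\bP^2$ with coordinates $x_0,x_1,x_2$. By the reduction in the introduction I may also assume $G$ fixes no point of $\mathfrak{l}$, so $\Ima(\rho_1)$ is one of $D_n$ $(n\ge 2)$, $A_4$, $S_4$, $A_5$.

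\emph{Birational model.} Suppose first $C\not\equiv 0$, and consider the $G$-equivariant map
\[
\phi\colon X\dashrightarrow \bP^2\times\bP^1,\qquad [x_0:\cdots:x_4]\mapsto\big([x_0:x_1:x_2],[x_3:x_4]\big),
\]
with $G$ acting by $\rho_2$ on $\bP^2$ and by $\rho_1$ on $\bP^1$. It is dominant, and for a general point $([y],[a:b])$ its fibre is recovered by solving the single linear equation $\lambda C(y)+\mu\,(aQ_3(y)+bQ_4(y))=0$ for the scalar $(\lambda:\mu)$ rescaling the coordinates, which gives a unique point; hence $\phi$ is a $G$-birational isomorphism onto $\bP^2\times\bP^1$ with the product action. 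When $C\equiv 0$ the plane $\{x_3=x_4=0\}$ lies on $X$ and is $G$-invariant; an analogous explicit parametrization, now by the lines of $X$ joining this plane to $\mathfrak{l}$, again identifies $X$ $G$-birationally with $\bP^2\times\bP^1$, this time with a \emph{diagonal} action on the $\bP^1$-factor.

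\emph{Applying Proposition~\ref{prop:25}.} Write $\bP^2\times\bP^1=\bP_{\bP^2}(\cE)$ with $\cE=\mathcal{O}_{\bP^2}(1)^{\oplus 2}$; the natural $\widetilde G$-linearization twisted by $\mathcal{O}(1)$ has trivial central scalars, hence descends to a $G$-linearization of $\cE$ inducing the above action. By Proposition~\ref{prop:25} it then suffices to prove that the $G$-action $\rho_2$ on $\bP^2$ is generically free and linearizable. Generic freeness is automatic when $C\not\equiv 0$: any element of $\ker\rho_2$ acts trivially on $\Sym^2$, hence on $\langle Q_3,Q_4\rangle$, hence lies in $\ker\rho_1$; and $\ker\rho_1\cap\ker\rho_2=1$ because $\rho_1\times\rho_2$ is injective, a block-diagonal scalar projective transformation preserving this $X$ being trivial. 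Linearizability of $\rho_2$ is the crux, and I expect essentially all of the real work to lie here: one would use the classification of finite subgroups of $\PGL_3$ preserving a pencil of conics — which excludes the non-linearizable actions on $\bP^2$, since the Valentiner, Klein and Hessian groups admit no invariant conic pencil — and then check linearizability case by case, exploiting the extra datum that this subgroup also carries the homomorphism $\rho_1$ with non-cyclic image, and separately treating degenerate pencils (a fixed line component puts $\rho_2(G)$ inside a parabolic of $\PGL_3$, where linearizability is clear) and the case $C\equiv 0$, in which $\ker\rho_2$ may be a nontrivial cyclic group acting on the $\bP^1$-factor through the restriction of a genuine character of $G$ — precisely what is needed to absorb it.
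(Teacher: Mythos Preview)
Your plan is essentially the paper's own approach in the generic case, and it works there, but you have both over- and under-estimated where the difficulty lies. First, the over-estimation: once you know $\rho_2\colon G\to\PGL_3$ is generically free, there is nothing left to prove about ``linearizability of $\rho_2$''. A subgroup of $\PGL_3$ acting on $\bP^2$ \emph{is} a linear action in the sense of this paper, so Proposition~\ref{prop:25} applies immediately; no classification of subgroups preserving a conic pencil is needed. Your faithfulness argument for $\rho_2$ when $C\not\equiv 0$ is correct, and at that point the $C\not\equiv 0$ case is finished. This is exactly what the paper does in its cases (1), (2), (4), (5), via the identical map $\chi=\phi$.

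The genuine gap is the case $C\equiv 0$. Here your map $\phi$ is not birational (indeed not even dominant: the equation $x_3Q_3+x_4Q_4=0$ is bihomogeneous, so $\phi(X)$ is a divisor of bidegree $(2,1)$ in $\bP^2\times\bP^1$). Your proposed ``analogous parametrization'' by lines joining the invariant plane to $\mathfrak{l}$ does not give a well-defined map from $\bP^2\times\bP^1$, since scaling $y$ scales the two blocks by different weights; at best it exhibits $X$ as $\bP(\cO\oplus\cO(1))$ over $\bP^2$, not a product. More seriously, in the paper's case~(3) one has $\Aut(X)=\bG_m\times\fS_4$, and the torus element $\tau_a=\mathrm{diag}(1,1,a,a,a)$ lies in $\ker\rho_1\cap\ker\rho_2$: it acts trivially on both the line $\mathfrak{l}$ and on $\bP^2_{x_3,x_4,x_5}$, yet nontrivially on $X$. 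So for any finite $G$ meeting the torus nontrivially, the $G$-action simply does not factor through $\PGL_2\times\PGL_3$, and no product model can carry it; your suggestion to ``absorb $\ker\rho_2$ into the $\bP^1$-factor via a character'' cannot work because $\ker\rho_2\subseteq\ker\rho_1$ here. The paper resolves this case by a different construction: it unprojects $X$ from the invariant plane $\{x_4=x_5=0\}$ to a singular $X_{2,2}\subset\bP^5$, and then projects from the (image of the) singular line to obtain an explicit $\Aut(X)$-equivariant birational map $X\dashrightarrow\bP^3$. This is the missing idea in your plan.
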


A normal form for $X$ is given by
\begin{equation} 
\label{eqn:linesing}
x_1q_1(x_3,x_4,x_5)+x_2q_2(x_3,x_4,x_5)+c(x_3,x_4,x_5)=0,
\end{equation}
where $\mathfrak{l}=\{x_3=x_4=x_5=0\}$, $q_1$ and $q_2$ are quadratic forms, and $c$ is a cubic form, see \cite[Proposition 4.2]{yokocub}. We have a natural identification $\mathfrak{l}=\bP^1_{x_1,x_2}$.
Let $\beta:\tilde{\bP}\to \bP^4$ be the blowup of $\mathfrak{l}$. This yields the commutative diagram:
$$
\xymatrix{
& \tilde{\bP}\ar[ld]_{\beta}\ar[rd]^{\varpi} &\\
\bP^4\ar@{-->}[rr] && \bP^2_{x_3,x_4,x_5}}
$$
where $\varpi$ is a $\bP^2$-bundle, 
and the dashed arrow is the projection from $\mathfrak{l}$.

Let $E$ be the $\beta$-exceptional divisor. 
Then $\beta$ and $\varpi$ induce a natural isomorphism 
$$
E\simeq \mathfrak{l}\times\bP^2_{x_3,x_4,x_5}=\bP^1_{x_1,x_2}\times\bP^2_{x_3,x_4,x_5}.
$$
Let $\tilde{X}$ be the strict transform of $X$ on the fourfold $\tilde{\bP}$.
We have the induced $\Aut(X)$-equivariant commutative diagram: 
$$
\xymatrix{
& \tilde{X}\ar[ld]_{\beta\vert_{\tilde{X}}}\ar[rd]^{\pi} &\\
X\ar@{-->}[rr] && \bP^2_{x_3,x_4,x_5}}
$$
where $\pi=\varpi\vert_{\tilde{X}}$ is a morphism such that its fibers are isomorphic either to $\bP^1$ or $\bP^2$. 
Put $S:=\tilde{X}\vert_{E}$. Then $S$ is a divisor of  bi-degree $(1,2)$ in $E\simeq \bP^1_{x_1,x_2}\times\bP^2_{x_3,x_4,x_5}$ that is given by
$$
x_1q_1(x_3,x_4,x_5)+x_2q_2(x_3,x_4,x_5)=0.
$$
For general $q_1$ and $q_2$, $S$ is a smooth del Pezzo surface of degree $5$, the natural projection $S\to\bP^2_{x_3,x_4,x_5}$ is a blowup of $4$ points in general position, and the natural projection $S\to\bP^1_{x_1,x_2}$ is a conic bundle with $3$ singular fibers. If $q_1$ and $q_2$ are special, $S$ may be singular.

First, assume that $S$ is smooth. In this case, $X$ can be given by 
\begin{multline}
  \label{eqn:xsm}
x_1(x_3^2+\zeta_3x_4^2+\zeta_3^2x_5^2)+x_2(x_3^2+\zeta_3^2x_4^2+\zeta_3x_5^2)+\\
+a_2(x_3^2x_4+x_3x_4^2+x_3^2x_5+x_4^2x_5-x_3^3-x_4^3)+\\
+a_1x_3x_4x_5+a_2x_3x_5^2+a_3x_4x_5^2+(a_4-2a_2)x_5^3=0,
\end{multline}
for some $a_1,a_2,a_3,a_4$, the morphism $\varpi\vert_{S}:S\to\bP^2_{x_3,x_4,x_5}$ is a blowup of the points 
\begin{equation}
\label{eq:4pts}
    [1:1:1], \quad [1:1:-1],\quad [1:-1:1],\quad [-1:1:1],
\end{equation}
and the singular fibers of the conic bundle $\beta\vert_{S}\colon S\to\mathfrak{l}$ lie over the points 
\begin{equation}
    \label{eqn:points}
[1:-1:0:0:0], \quad [1:-\zeta_3:0:0:0], \quad [1:-\zeta_3^2:0:0:0].    
\end{equation}

Let $G\subseteq \Aut(X)$ be a finite subgroup. The set of  points 
in \eqref{eqn:points} must be a $G$-orbit of length $3$, unless one of them is fixed by $G$. 
Note that the image of $G$ in $\mathrm{Aut}(\mathfrak{l})\simeq\PGL_2(k)$ is contained in the subgroup isomorphic to $\fS_3$, generated by 
\begin{align}\label{eqn:S3line}
    (x_1,x_2)\mapsto(x_2,x_1)
\end{align}
and 
$$
(x_1,x_2)\mapsto(\zeta_3x_1,\zeta_3^2x_2).
$$
Since this subgroup has an orbit of length $2$ consisting of the points $[1:0]$ and $[0:1]$, we see that the points
$$
[1:0:0:0:0], \quad  [0:1:0:0:0]
$$
form a $G$-orbit in $\mathfrak{l}$ of length $2$, unless both are fixed by $G$. This allows us to classify all automorphism groups: 

\begin{prop}
\label{prop:1-line}
Let $X$ be a cubic threefold singular along a line and such that the associated degree-5 del Pezzo surface is smooth. Assume that $\Aut(X)$ does not fix a singular point of $X$. Then, up to isomorphism, $X$ is given by \eqref{eqn:xsm} and one of the following holds: 
    \begin{enumerate}
        \item $a_1\in k$, $a_2=a_3=a_4=1$, $\Aut(X)=\fS_3$ is generated by 
        \begin{align*}
           \sigma_1&: (\mathbf x)\mapsto (x_2,x_1,x_3,x_5,x_4),\\
               \sigma_2&: (\mathbf x)\mapsto (\zeta_3x_1,\zeta_3^2x_2,x_4,x_5,x_3).
        \end{align*}
             \item $a_1=1$, $a_2=a_3=a_4=0$, $\Aut(X)=\fS_4$ is generated by $\sigma_1,\sigma_2$ and
       $$
           \iota_1=\mathrm{diag}(1,1,1,-1,-1).
        $$
     \item $a_1=a_2=a_3=a_4=0$, $\Aut(X)=\bG_m\times\fS_4$ is generated by $\sigma_1,\sigma_2,\iota_1$ and 
     $$
     \tau_a=\mathrm{diag}(1,1,a,a,a),\quad a\in k^\times.
     $$
    \end{enumerate}
   
\end{prop}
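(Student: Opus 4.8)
The plan is to exploit the two canonical fibrations on the quintic del Pezzo surface $S$ — the conic bundle $\beta|_S\colon S\to\mathfrak l\cong\bP^1_{x_1,x_2}$ with singular fibers over \eqref{eqn:points}, and the blow-down $\varpi|_S\colon S\to\bP^2_{x_3,x_4,x_5}$ of the four general points \eqref{eq:4pts} — both of which are $\Aut(X)$-equivariant since $\Aut(X)$ preserves $\mathfrak l=\Sing(X)$. Thus $G:=\Aut(X)$ maps to $\PGL_2(k)=\Aut(\mathfrak l)$ preserving the three points \eqref{eqn:points}, and to $\PGL_3(k)=\Aut(\bP^2_{x_3,x_4,x_5})$ preserving the pencil $\langle q_1,q_2\rangle$, hence the four points \eqref{eq:4pts}; moreover the two induced actions on the pencil $\bP^1$ coincide. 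First I would pin down the image $\overline G$ of $G$ in $\PGL_2(k)$. By the discussion preceding the statement, $\overline G$ is contained in the $\fS_3$ generated by \eqref{eqn:S3line}, which permutes \eqref{eqn:points} and has $\{[1\!:\!0],[0\!:\!1]\}$ as an orbit. Since every point of $\mathfrak l$ is a singular point and $G$ fixes no singular point, $\overline G$ has no fixed point on $\mathfrak l$; but any proper subgroup of this $\fS_3$ fixes one of the three points \eqref{eqn:points} or one of $[1\!:\!0],[0\!:\!1]$ — all singular — so $\overline G=\fS_3$, and $G$ contains lifts $\tilde\sigma_2,\tilde\sigma_1$ of the order-$3$ and an order-$2$ generator.

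Next I would normalize these lifts. Because $\tilde\sigma_2$ fixes $[1\!:\!0],[0\!:\!1]$, which correspond to the conics $q_1,q_2$, it scales each of $q_1,q_2$, with the ratio of the two scalars forced to be $\zeta_3$ by the action $(x_1,x_2)\mapsto(\zeta_3x_1,\zeta_3^2x_2)$ on $\mathfrak l$; since $q_1,q_2$ are diagonal forms with distinct entries, an elementary matrix computation then shows the induced element of $\PGL_3$ is a cyclic permutation of $x_3,x_4,x_5$ composed with a sign matrix $\mathrm{diag}(\pm1,\pm1,\pm1)$. Using the sign matrices — which fix $q_1,q_2$, hence preserve the shape of \eqref{eqn:xsm} — one arranges $\tilde\sigma_2=\sigma_2$; the only obstruction is that the three signs multiply to $-1$, in which case $\tilde\sigma_2^{\,3}=\mathrm{diag}(1,1,-1,-1,-1)\in\Aut(X)$ forces $c=0$, i.e. case (3), where $\sigma_2$ is visibly an automorphism anyway. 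Invariance of \eqref{eqn:xsm} under $\sigma_2$ now forces $c$ to be invariant under the cyclic permutation of $x_3,x_4,x_5$, which read off \eqref{eqn:xsm} is exactly the condition $a_2=a_3=a_4$; then $c$ is fully symmetric in $x_3,x_4,x_5$, so $\sigma_1\in\Aut(X)$ as well. Finally, rescaling all $a_i$ by a common scalar via $\mathrm{diag}(1,1,\lambda,\lambda,\lambda)$ (again preserving the shape of \eqref{eqn:xsm}) puts us in case (1), (2) or (3) according to whether $a_2=a_3=a_4$ is nonzero, zero with $a_1\neq0$, or all zero.

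For the last part, determining $\Aut(X)$ exactly, I would first exhibit the extra symmetries: in cases (2), (3) a direct substitution shows $\iota_1\in\Aut(X)$, and in case (3) every $\tau_a\in\Aut(X)$; one checks $\langle\sigma_1,\sigma_2,\iota_1\rangle\cong\fS_4$ (the symmetric group on $x_3,x_4,x_5$ extended by the Klein four-group of even sign matrices, with the correct conjugation action) and that $\tau_a$ is central. For the reverse inclusion I would note that $\Aut(X)$ injects into $\PGL_2\times\PGL_3$ modulo its kernel, which is $\{\tau_a\}\cong\bG_m$ when $c=0$ and trivial otherwise — a kernel element is, up to scalar, of the form $\begin{pmatrix}I_2&B\\0&\mu I_3\end{pmatrix}$ and preserves $X$ only if $(1-\mu)c\in\langle q_1,q_2\rangle\cdot\langle x_3,x_4,x_5\rangle$, which in the normal form \eqref{eqn:xsm} forces $c=0$. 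The $\PGL_2$-image is $\fS_3$; the $\PGL_3$-image $H$ lies in $\fS_4$ (the symmetries of \eqref{eq:4pts}, acting faithfully) and must surject onto the $\PGL_2$-image under $\fS_4\twoheadrightarrow\fS_4/V_4=\fS_3$, so $H\in\{\fS_3,\fS_4\}$ and $\Aut(X)\cong H$ (times $\bG_m$ in case (3)). It then remains only to rule out $H=\fS_4$ in case (1): such an $H$ would contain $V_4$, producing an element of $\Aut(X)$ that acts trivially on $\mathfrak l$ and by a nontrivial even sign matrix on $x_3,x_4,x_5$, and a direct check shows the (symmetric) cubic $c$ of case (1) is not carried to a scalar multiple of itself by any such map modulo $\langle q_1,q_2\rangle\cdot\langle x_3,x_4,x_5\rangle$, so $\Aut(X)=\fS_3$; the same bookkeeping confirms $\Aut(X)=\fS_4$ in case (2) and $\bG_m\times\fS_4$ in case (3).

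I expect the main obstacle to be the normalization in the second paragraph: realizing $\tilde\sigma_1$ and $\tilde\sigma_2$ \emph{simultaneously} as $\sigma_1,\sigma_2$ while keeping \eqref{eqn:xsm} intact, and tracking the residual coordinate freedom carefully enough not to overlook parameter identifications among the $a_i$. The determination of $\Aut(X)$ in the third paragraph is also somewhat delicate, but it reduces to a bounded amount of linear algebra once the essentially faithful action of $\Aut(X)$ on the pair $(\mathfrak l,\bP^2_{x_3,x_4,x_5})$ is in hand; the remaining verifications — invariance of the explicit $c$ under $\sigma_1,\sigma_2$ when $a_2=a_3=a_4$, and the non-invariance used to exclude $\fS_4$ in case (1) — are routine.
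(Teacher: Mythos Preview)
Your overall strategy matches the paper's: both first show that the image $\overline G$ of $\Aut(X)$ in $\Aut(\mathfrak l)$ must be the full $\fS_3$ (else a singular point is fixed), deduce $a_2=a_3=a_4$, and then determine the generic stabilizer of $\mathfrak l$. The paper handles the last step purely computationally --- it writes a generic element of the kernel $H$ of $\Aut(X)\to\PGL_2$ as a $5\times 5$ matrix with lower-left block $B$ and lower-right block $C$, imposes invariance of \eqref{eqn:xsm}, and solves the resulting system --- whereas you factor the same computation through the second projection to $\PGL_3=\Aut(\bP^2_{x_3,x_4,x_5})$, observe that the image there lies in the $\fS_4$ permuting \eqref{eq:4pts}, and use the quotient $\fS_4\twoheadrightarrow\fS_3$ to organize the possibilities. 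Your route is more conceptual and explains \emph{why} $\fS_4$ appears; the paper's is shorter but opaque.

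One point to tighten in your normalization step: conjugating by sign matrices only adjusts the $\PGL_3$-component $C$ of $\tilde\sigma_2$, not the off-diagonal block $B$ (the shear $x_i\mapsto x_i+\ell_i(x_3,x_4,x_5)$ for $i=1,2$). So you cannot literally ``arrange $\tilde\sigma_2=\sigma_2$'' by signs alone. What you actually need --- and what you compute correctly later when analyzing the kernel --- is that the existence of \emph{any} lift $\tilde\sigma_2$ with cyclic $C$-block forces $c(x_4,x_5,x_3)\equiv c(x_3,x_4,x_5)\pmod{q_1\cdot\langle x_3,x_4,x_5\rangle+q_2\cdot\langle x_3,x_4,x_5\rangle}$, and a direct coefficient comparison against the basis $\langle x_3^2-x_4^2,\,x_4^2-x_5^2\rangle$ of the pencil shows this congruence already implies $a_2=a_3=a_4$. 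Once that holds, the specific lift $\sigma_2$ (with $B=0$) preserves $X$ on the nose. Reordering the argument this way closes the gap without changing any of your computations.
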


\begin{proof}
By assumption, the induced $G$-action on $\mathfrak l$ is the $\fS_3$-action generated by \eqref{eqn:S3line}. This action comes from $\Aut(X)$ 
only when 
$$
a_2=a_3=a_4.
$$
There is an exact sequence 
$$
0\to H\to G\to\fS_3\to 0,
$$
where $H$ is the generic stabilizer of $\mathfrak l$, consisting of elements of the form 
$$
\begin{pmatrix}
    1&0&0&0&0\\
    0&1&0&0&0\\
    b_1&c_1&d_1&e_1&f_1\\
     b_2&c_2&d_2&e_2&f_2\\
      b_3&c_3&d_3&e_3&f_3
\end{pmatrix}.
$$
The condition that such elements leave invariant \eqref{eqn:xsm} gives a system of equations. Solving for the parameters $b_i,c_i,d_i,e_i,f_i$ leads to the three cases in the assertion. 
\end{proof}


We turn to the case when $S$ is singular.

\begin{prop}
\label{prop:s}
Let $X$ be a cubic threefold singular along a line and such that the associated degree-5 del Pezzo surface $S$ is singular. Assume that  
$\Aut(X)$ does not fix a singular point on $X$. 
Then, up to isomorphism, one of the following holds: 
\begin{itemize}
\item[(4)] $X=\{x_1x_3^2+x_2x_4^2+x_5^3=0\}$, and $\Aut(X)=\bG_m^2\rtimes C_2$, generated by 
$$
\mathrm{diag}(t_1^{-2},1,t_1,1,1),\quad \mathrm{diag}(1,t_2^{-2},1,t_2,1),
$$
$$
(\mathbf{x})\mapsto (x_2,x_1,x_4,x_3,x_5),\quad t_1,t_2\in k^\times.
$$
\item[(5)] $X:=\{ x_1x_3^2+x_2x_4^2+x_3x_4x_5+x_5^3=0\}$, and $\Aut(X)=\bG_m\rtimes C_2$, generated by 
$$
\mathrm{diag}(t^{-2},t^{2},t,t^{-1},1),
\quad
(\mathbf{x})\mapsto (x_2,x_1,x_4,x_3,x_5),\quad t\in k^\times.
$$

\end{itemize}
\end{prop}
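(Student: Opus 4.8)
The plan is to mirror the structure used in Proposition~\ref{prop:1-line}: first pin down the normal form of $X$ under the assumption that $S$ is singular, then compute $\Aut(X)$, then check that the indicated generators exhaust it. Concretely, I would start from the normal form \eqref{eqn:linesing}, write $S\subset\bP^1_{x_1,x_2}\times\bP^2_{x_3,x_4,x_5}$ as the vanishing of $x_1q_1+x_2q_2$, and analyze when this bidegree $(1,2)$ divisor is singular. A singular point of $S$ forces $q_1$ and $q_2$ to have a common zero in $\bP^2$ with matching tangent directions, or one of the fibers of $S\to\bP^1$ to be a double line; translating this into conditions on the pencil $\lambda q_1+\mu q_2$ of conics, the possibilities are: the pencil has a base point where it is non-reduced, or the pencil contains a double line, or $q_1,q_2$ are proportional. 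Each degeneration of the pencil of conics (classified by the Segre symbol / Kronecker normal form of the pencil) gives a normal form for the pair $(q_1,q_2)$ up to the action of $\GL_2$ on $(x_1,x_2)$ and $\GL_3$ on $(x_3,x_4,x_5)$, and then one absorbs $c(x_3,x_4,x_5)$ by a change of the $x_1,x_2$ coordinates (adding multiples of $q_1,q_2$) as far as possible. I expect that, after discarding the cases where $\Aut(X)$ is forced to fix a singular point of $X$ — in particular the cases where $\mathrm{Sing}(X)$ acquires an extra distinguished point, or where $X$ becomes a cone, or is reducible — exactly the two normal forms (4) and (5) survive.

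Once the normal forms are fixed, computing $\Aut(X)$ proceeds as in Proposition~\ref{prop:1-line} via the exact sequence $0\to H\to \Aut(X)\to \bar G\to 0$, where $\bar G\subseteq\PGL_2(k)$ is the image acting on $\mathfrak l=\bP^1_{x_1,x_2}$ and $H$ is the generic stabilizer of $\mathfrak l$, consisting of matrices fixing $x_1,x_2$. For case (4), $X=\{x_1x_3^2+x_2x_4^2+x_5^3=0\}$: any automorphism must preserve the singular locus and the distinguished structure, so it permutes the two rulings $\{q_1=x_3^2\}$, $\{q_2=x_4^2\}$; the subgroup preserving each gives the two $\bG_m$'s $\mathrm{diag}(t_1^{-2},1,t_1,1,1)$ and $\mathrm{diag}(1,t_2^{-2},1,t_2,1)$ (the exponent bookkeeping is forced by invariance of the monomials $x_1x_3^2$, $x_2x_4^2$, $x_5^3$), and swapping them gives the extra involution; no further freedom remains because the $x_5^3$ term rigidifies $x_5$ up to a cube root of unity which is already in the torus. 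For case (5) the extra term $x_3x_4x_5$ cuts the torus down to a single $\bG_m$, $\mathrm{diag}(t^{-2},t^2,t,t^{-1},1)$ (the unique one-parameter subgroup scaling all four monomials $x_1x_3^2$, $x_2x_4^2$, $x_3x_4x_5$, $x_5^3$ equally), again with the swap involution; I would verify there are no sporadic extra symmetries by the same system-of-equations argument as in the proof of Proposition~\ref{prop:1-line}.

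The main obstacle I anticipate is the classification step itself: being exhaustive about which degenerations of the pencil $\lambda q_1+\mu q_2$ make $S$ singular, and then for each one correctly reducing to a normal form while simultaneously tracking what happens to the cubic term $c$ and verifying the "does not fix a singular point of $X$" hypothesis. Several degenerations will need to be shown to either (i) violate irreducibility of $X$, (ii) make $X$ a cone, (iii) produce an extra $\mathrm{Aut}(X)$-fixed singular point (e.g. when $\mathrm{Sing}(X)$ contains a second line meeting $\mathfrak l$, handled already in the discussion preceding Theorem~\ref{thm:line}), or (iv) be isomorphic to (4) or (5) after a coordinate change. Organizing this case analysis cleanly — rather than the mechanical parts of computing $\Aut(X)$ once a normal form is in hand — is where the real work lies, and I would present it as a lemma enumerating the singular-$S$ normal forms, followed by a short per-case automorphism computation.
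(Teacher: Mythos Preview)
Your plan is sound and would work, but the paper organizes the case analysis differently and more economically. Rather than classifying all degenerate pencils $\lambda q_1+\mu q_2$ via Segre symbols, the paper splits on whether the singularities of $S$ are Du Val. When they are, it invokes the classification of singular degree-$5$ del Pezzo surfaces (Coray--Tsfasman) to see that the only configuration without a distinguished singular point is $2\mathsf A_1$; but then the conic bundle $S\to\mathfrak l$ has exactly two singular fibers of different types (one reduced, one not), so they cannot be swapped and $G$ fixes two points of $\mathfrak l$ --- contradiction. When the singularities are \emph{not} Du Val, the paper observes directly that (after a coordinate change) $q_1,q_2$ depend only on $x_3,x_4$, whence one may take $q_1=x_3^2$, $q_2=x_4^2$; the cubic $c$ is then reduced by absorbing monomials into $x_1,x_2$, and a short subcase analysis on the surviving coefficients yields exactly (4) and (5). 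For the automorphism groups, the paper first pins down the image of $\Aut(X)$ in $\Aut(\mathfrak l)$ by noting that $X$ has strictly worse singularities at $[1{:}0{:}0{:}0{:}0]$ and $[0{:}1{:}0{:}0{:}0]$ than at generic points of $\mathfrak l$, forcing the image to be exactly $\bG_m\rtimes C_2$; then the generic stabilizer $H$ is computed as you propose.

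What your Segre-symbol route buys is self-containment (no appeal to an external del Pezzo classification), at the cost of more bookkeeping; the paper's Du Val/non-Du Val dichotomy buys brevity by outsourcing the mild-singularity cases. One small correction to your sketch: a double-line fiber of $S\to\bP^1$ away from a base point of the pencil does \emph{not} by itself produce a singular point of $S$ --- singular points of $S$ always sit over base points of the pencil with proportional gradients, so your case split should be phrased accordingly.
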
 

\begin{proof}
We start by observing that $q_1$ and $q_2$ from \eqref{eqn:linesing} are linearly independent, since $S$ is not a cone.
Now, we suppose that $G$ does not fix points in $\mathfrak{l}$. In particular, $G$ does not fix points in $S$.

If $S$ has Du Val singularities, all possibilities for $S$ are described in \cite[Proposition 8.5]{coray}. 
In particular, since $\Aut(X)$ does not fix points in $S$, we see that $S$ does not have a distinguished singular point, 
which leaves only one possibility for the singular locus of $S$ -- it consists of two singular point of type $\mathsf A_1$. 
But in this case, the conic bundle $S\to\mathfrak l$ has exactly two singular fibers, one of them is not reduced, and the other is reduced and contained in the smooth locus of $S$, so they cannot be swapped by the action of $G$, which implies that $G$ fixes two points in $\mathfrak{l}$.

Hence, the singularities of $S$ are not Du Val. 
Thus, up to a change of coordinates, we may assume that $q_1$ and $q_2$ do not depend on $x_5$.
Then, up to a change of variables, we may assume that  
$$
q_1=x_3^2, \quad q_2=x_4^2. 
$$
Now, the equation~\eqref{eqn:linesing} takes the form
$$
x_1x_3^2+x_2x_4^2+c(x_3,x_4,x_5)=0.
$$
After a linear change of $x_1,x_2$, we may assume that $c$ does not contain monomials divisible by $x_3^2$ and $x_4^2$,
so that
$$
c(x_3,x_4,x_5)=c_1x_5^3+x_5^2(c_2x_3+c_3x_4)+c_4x_3x_4x_5.
$$
\begin{itemize} 
\item 
If $c_1\ne 0$, we can change  $x_1$, $x_2$ and $x_5$ to get 
$$
c(x_3,x_4,x_5)=x_5^3+c_4x_3x_4x_5.
$$
Now, if $c_4=0$, we get case (4). Otherwise we can scale coordinates so that $c_4=1$, and obtain case (5).
\item 
If $c_1=0$ and $c_2\ne 0$ or $c_3\ne 0$, then we can change $x_1$, $x_2$ and $x_5$ to get 
$$
c(x_3,x_4,x_5)=x_5^2(c_2x_3+c_3x_4),
$$
where at least one of $c_2$ or $c_3$ is not zero, since $X$ is not a cone. Then $X$ has a distinguished singularity in $\mathfrak{l}$, so this point must be fixed by $\Aut(X)$. Hence, this case is impossible. 
\item 
If $c_1=0$ and $c_2$ and $c_3=0$, then $c(x_3,x_4,x_5)=c_4x_3x_4x_5$, which means that $X$ is singular along a plane. Hence, this case is impossible.
\end{itemize}

Now we determine the automorphism groups of the two possible cases. 
In both cases, the group $\Aut(X)$ contains the infinite dihedral group $\bG_m\rtimes C_2$,
and $\bG_m$ acts on $\mathfrak{l}$ with the fixed points
$$
[1:0:0:0:0],\quad [0:1:0:0:0],
$$
which are swapped by the action of $C_2$. 
Observe that $X$ has $\sA_2\times\mathbb{A}^1$ singularity at every point of $\mathfrak{l}$ different from $[1:0:0:0:0]$ and $[0:1:0:0:0]$,
but $X$ has worse singularities at these two points, which implies that they form one $\Aut(X)$-orbit.
It follows that there exists an exact sequence 
$$
0\to H\to\Aut(X)\to \bG_m\rtimes C_2\to 0,
$$
where $H$ is the generic stabilizer of $\mathfrak l$. Similarly as in Proposition~\ref{prop:1-line}, a direct computation of $H$ completes the proof. 
\end{proof}

Consider the rational map $\chi\colon X\dasharrow\mathbb{P}^1_{x_1,x_2}\times\mathbb{P}^2_{x_3,x_4,x_5}$ given by
$$
\mathbf{x}\mapsto\big((x_1,x_2),(x_3,x_4,x_5)\big),
$$
for $X$ in Propositions~\ref{prop:1-line} and~\ref{prop:s}. 
 The description of automorphisms implies that 
$\chi$ is $\mathrm{Aut}(X)$-equivariant. Following the labelling in Propositions~\ref{prop:1-line} and \ref{prop:s},  $\chi$ is a birational map in all five cases except case (3). We discuss linearizations: 
\begin{itemize}
    \item[(1)] 
    $\Aut(X)=\fS_3$, acting on $\bP^1\times \bP^2$ diagonally via the usual action on $\bP^1$ and the permutation action on $\bP^2$. The action is linearizable, by Proposition~\ref{prop:25}. 
    \item[(2)]
    $\Aut(X)=\fS_4$, acting on $\bP^1\times \bP^2$ via the $\fS_3$-action on $\bP^1$ and the standard faithful linear action on $\bP^2$; this is linearizable, by Proposition~\ref{prop:25}. 
     \item[(3)]
     In this case, $\chi$ is not birational. Instead, the $\Aut(X)$-equivariant birational map $\rho: X\dashrightarrow\bP^3$ given by
     $$
(\mathbf x)\mapsto (x_2x_3,x_2x_4,x_2x_5,x_3^2+\zeta_3x_4^2+\zeta_3^2x_5^2)
$$
yields linearizability. In detail, $\rho$ factors through the birational maps $\pi$ and $\varphi$:
$$
\xymatrix{
X\ar@{-->}[r]^{\varphi}&X_{2,2}\ar@{-->}[r]^{\pi}& \bP^3,
}
$$
where $\varphi: X\dashrightarrow X_{2,2}\subset \bP^5$ is the unprojection from the plane $\{x_4=x_5=0\}$, given by          
     $$
(\mathbf x)\mapsto (x_1x_2,x_2^2,x_3x_2,x_4x_2,x_5x_2,x_3^2+\zeta_3x_4^2+\zeta_3^2x_5^2),
$$
and $X_{2,2}\subset\bP^5_{y_1,\ldots,y_6}$ is the intersection of two quadrics given by
$$
y_3^2+\zeta_3y_4^2 +\zeta_3^2y_5^2 + y_2y_6=
y_3^2 + \zeta_3^2y_4^2 + \zeta_3y_5^2 + y_1y_6=0.
$$ 
The singular locus of $X_{2,2}$ is the image of $\mathfrak l$, also a line. The birational map $\pi: X_{2,2}\dashrightarrow \bP^3$ is the
projection from this distinguished line, which fits the following commutative diagram:
$$
\xymatrix{
&\hat{X}_{2,2}\ar[ld]\ar[rd] &\\
X_{2,2}\ar@{-->}[rr]^{\pi} && \bP^3
}
$$
where $\hat{X}_{2,2}\to\bP^3$ is a blowup of $4$ general coplanar points, 
and $\hat{X}_{2,2}\to X_{2,2}$ is a birational map that contracts the strict transform of the plane spanned by these four points.      
    \item[(4)]
    The $\Aut(X)=\bG_m^2\rtimes C_2$-action on $\bP^2$ is generically free; any action of a finite subgroup is linearizable,  by Proposition~\ref{prop:25}. 
    \item[(5)] 
     Same as in Case (4), any action of a finite subgroup is linearizable.
\end{itemize}

\section{Conic}
\label{sect:conic}

We recall from \cite{yokocub} that the normal form of cubic threefolds $X$ singular along a conic $C$ is
\begin{multline*}
x_1q_1(x_4,x_5)+x_2q_2(x_4,x_5)+x_3q_3(x_4,x_5)+\\+q_4(x_1,x_2,x_3)l(x_4,x_5)+c(x_4,x_5)=0,
\end{multline*}
where $l$ is a linear form, $q_1,q_2,q_3,q_4$ are quadratic forms and $c$ is a cubic form. 
Let $\Pi=\{x_4=x_5=0\}$. Then $\Pi$ and $C$ are $\mathrm{Aut}(X)$-invariant and
$$
C=\{x_4=x_5=q_4=0\}\subset\Pi\subset X.
$$
We may assume that $C$ is smooth, otherwise the action either reduces to the case of a line, or is linearizable, via projection from the distinguished singularity. 
Similarly, we may assume that $\mathrm{Aut}(X)$ does not fix points in $C$. 

Changing coordinates on $\mathbb{P}^4$, we may assume that $q_4=x_1x_2+x_3^2$ and $l=x_5$. Then 
$$
\{x_5=0\}\cap X=2\Pi+\Pi^\prime,
$$
for some plane $\Pi^\prime\subset X$. Note that $2\Pi+\Pi^\prime$ is the only surface in the pencil of hyperplane sections containing $\Pi$ that splits as a union of three planes. Hence, the plane $\Pi^\prime$ is also $\mathrm{Aut}(X)$-invariant. Apriori, we have two possibilities:  $\Pi\ne \Pi^\prime$ (general case) and $\Pi=\Pi^\prime$ (special case). 

In the general case, the line $\Pi\cap\Pi^\prime$ intersects $C$ in two distinct points.
After another coordinate change, $\Pi^\prime=\{x_3=x_5=0\}$ and $X$ is given by
\begin{multline*}
x_1(a_1x_4x_5+a_2x_5^2)+x_2(b_1x_4x_5+b_2x_5^2)+x_3(x_4^2+c_1x_4x_5+c_2x_5^2)+\\
+(x_1x_2+x_3^2)x_5+e_1x_4^2x_5+e_2x_4x_5^2+e_3x_5^3=0,
\end{multline*}
for some $a_1,a_2,b_1,b_2,c_1,c_2,e_1,e_2,e_3\in k$.
Now, changing coordinates $x_1\mapsto x_1+\alpha x_3+\beta x_4$ and $x_2\mapsto x_2+\gamma x_3+\delta x_4$, for some $\alpha,\beta,\gamma,\delta\in k$,
we may further assume that $a_1=a_2=b_1=b_2=0$, and the defining equation of $X$ simplifies to
$$
x_3(x_4^2+c_1x_4x_5+c_2x_5^2)+(x_1x_2+x_3^2)x_5+e_1x_4^2x_5+e_2x_4x_5^2+e_3x_5^3=0.
$$
Finally, changing coordinates $x_3\mapsto x_3+\epsilon x_5$ and $x_4\mapsto x_4+\varepsilon x_5$, for some $\epsilon,\varepsilon\in k$,
we may assume that $c_1=c_2=0$, so that $X$ is given by
\begin{equation}
\label{eq:form-1}
x_3x_4^2+(x_1x_2+x_3^2)x_5+e_1x_4^2x_5+e_2x_4x_5^2+e_3x_5^3=0.
\end{equation}
We may assume that $(e_1,e_2,e_3)\ne(0,0,0)$ in \eqref{eq:form-1}, since otherwise $X$ would have additional singular point $[0:0:0:0:1]$,
which would be fixed by $\mathrm{Aut}(X)$. Moreover, scaling coordinates $x_1,x_2,x_3,x_4,x_5$ as 
$$
x_1\mapsto \frac{x_1}{s^2}, \quad  x_2\mapsto \frac{x_2}{s^2}, \quad x_3\mapsto \frac{x_3}{s^2}, \quad x_4\mapsto sx_4, \quad  x_5\mapsto s^4x_5,
$$
we scale $(e_1,e_2,e_3)$ as $(s^{6}e_1,s^{9}e_2,s^{12}e_3)$, so we really have 
$$
[e_1:e_2:e_3] \in \mathbb{P}(6,9,12)\simeq\mathbb{P}(1,3,2).
$$

Now, we turn to the special case when $\Pi=\Pi^\prime$.
Arguing as in the general case, we can change coordinates on $\mathbb{P}^4$ such that $X$ is given by 
\begin{equation}
\label{eq:form-2}
x_5(x_1x_2+x_3^2)+x_4^3+e_1x_4x_5^2+e_2x_5^3=0,
\end{equation}
for some $[e_1:e_2]\in\mathbb{P}(4,6)$. In this case, $X$ has an $\mathsf{A}_2$-singularity at a general point of the conic $C$. 

In both cases, consider the unprojection of the cubic $X$ from the plane $\Pi$, similar to the approach in \cite{CTZ-cubic,CMTZ}, for cubics with isolated singularities. Namely, if $X$ is given by \eqref{eq:form-1}, we can introduce a new coordinate 
$$
y_6=\frac{x_3x_4+e_1x_4x_5+e_2x_5^2}{x_5}=\frac{x_1x_2+x_3^2+e_3x_5^2}{-x_4},
$$
which gives an $\mathrm{Aut}(X)$-equivariant birational map $X\dasharrow X_{2,2}$, where $X_{2,2}$ is a complete intersection of two quadrics in $\mathbb{P}_{y_1,\ldots,y_6}^5$ given by 
$$
y_3y_4 + e_1y_4y_5 + e_2y_5^2 -y_5y_6=
y_1y_2 + y_3^2 + e_3y_5^2 + y_4y_6=0.
$$
In this case, $X_{2,2}$ has two isolated ordinary double points 
$$
[1:0:0:0:0:0],\quad [0:1:0:0:0:0],
$$
for general $[e_1:e_2:e_3]\in \mathbb{P}(6,9,12)$. If $X$ is given by \eqref{eq:form-2}, then we let
$$
y_6=\frac{x_4^2+e_1x_5^2}{x_5}=\frac{x_1x_2+x_3^2+e_3x_5^2}{-x_4},
$$
and obtain an $\mathrm{Aut}(X)$-equivariant birational map $X\dasharrow X_{2,2}$, where $X_{2,2}$ is given by 
$$
y_4^2 + e_1y_5^2 -y_5y_6=
y_1y_2 + y_3^2 + e_2y_5^2 + y_4y_6=0.
$$ 
In this case, the singular locus 
$$
\Sing(X_{2,2})=\{y_1y_2+y_3^2=y_4=y_5=y_6=0\}
$$ 
is also a smooth conic, for general $[e_1:e_2]\in\bP(4,6)$.
Let 
$$
\beta: \tilde{\bP}\to \bP^4
$$
be the blowup of $\Pi$. We have the commutative diagram

 \

\centerline{
\xymatrix{
 &  \tilde{\bP} \ar[ld]_{\beta}\ar[rd]^{\varpi} & \\
\bP^4 \ar@{-->}[rr] & & \bP^1_{x_4,x_5}
}
}

\ 

\noindent 
where $\varpi$ is a $\bP^3$-bundle and the dashed arrow is the projection from the plane $\Pi$. The restriction of $\varpi$ to $\tilde{X}$, the strict transform of $X$, is a quadric surface bundle.




\begin{prop}\label{prop:lineauto-1}
    Let $X$ be a cubic given by \eqref{eq:form-1} 
    $$
x_3x_4^2+(x_1x_2+x_3^2)x_5+e_1x_4^2x_5+e_2x_4x_5^2+e_3x_5^3=0,
$$
    with parameters $e_1,e_2,e_3\in k$, such that $\Aut(X)$ does not fix any singular points of $X$. Then one of the following holds:
    \begin{enumerate}
    \item $e_1,e_2,e_3$ are general, and $\Aut(X)=\bG_m\rtimes C_2$, generated by 
    $$
  \tau_a: \mathrm{diag}(a,a^{-1},1,1,1),\,a\in k^\times, \quad \sigma_{(12)}:(\mathbf x)\mapsto (x_2,x_1,x_3,x_4,x_5).
    $$
        \item $e_1,e_3\ne 0$, $e_2=0$,  and $\Aut(X)=C_2\times(\bG_m\rtimes C_2)$, generated by $\tau_a,\sigma_{(12)}$ and 
        $$
        \eta_1:\mathrm{diag}(1,1,1,-1,1).
        $$
            \item $e_1=e_3=0$, $e_2\ne 0$,  and $\Aut(X)=C_3\times(\bG_m\rtimes C_2)$, generated by $\tau_a,\sigma_{(12)}$ and 
        $$
        \eta_2:\mathrm{diag}(1,1,1,\zeta_3,\zeta_3^2).
        $$
          \item $e_1=e_2=0$, $e_3\ne 0$, and $\Aut(X)=C_4\times(\bG_m\rtimes C_2)$, generated by $\tau_a,\sigma_{(12)}$ and 
        $$
        \eta_3:\mathrm{diag}(1,1,1,\zeta_4,-1).
        $$
          \end{enumerate}
\end{prop}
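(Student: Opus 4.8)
The plan is to determine $\mathrm{Aut}(X)$ by the same strategy used in Propositions~\ref{prop:1-line} and~\ref{prop:s}: identify the induced action on a canonical low-dimensional quotient, then compute the generic stabilizer of $\Pi$ by a direct linear-algebra calculation. Concretely, the plane $\Pi=\{x_4=x_5=0\}$ and the conic $C$ are $\mathrm{Aut}(X)$-invariant, as is the residual plane $\Pi'=\{x_3=x_5=0\}$, and more to the point the hyperplane $\{x_5=0\}$ is the unique member of the pencil of hyperplanes through $\Pi$ whose intersection with $X$ splits as $2\Pi+\Pi'$; hence $\{x_5=0\}$ is $\mathrm{Aut}(X)$-invariant, so every $g\in\mathrm{Aut}(X)$ acts on the coordinate $x_5$ by a scalar. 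I would also track the invariant line $\Pi\cap\Pi' = \{x_3=x_4=x_5=0\}=\bP^1_{x_1,x_2}$, on which $C$ imposes the two points $[1:0]$, $[0:1]$ (the intersection with $q_4=x_1x_2+x_3^2=0$ restricted to this line), which must therefore be preserved as a set — this forces the $\{x_1,x_2\}$-block to be either diagonal or anti-diagonal, giving the $\bG_m\rtimes C_2$ appearing in every case.

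The next step is to set up the general element of $\mathrm{Aut}(X)$ respecting these invariant flags and impose invariance of the equation \eqref{eq:form-1}. By the above, after the normalizations already made we may write $g$ in block form with $x_5\mapsto \lambda x_5$, the $(x_1,x_2)$-block either $\mathrm{diag}(a,a')$ or the swap times scalars, $x_3\mapsto \mu x_3 + (\text{possible } x_4,x_5\text{ terms})$, and $x_4\mapsto \nu x_4 + (\text{possible } x_5\text{ term})$ — plus a priori off-diagonal contributions of $x_1,x_2$ into the $x_3,x_4$ rows, which the invariant subspace structure rules out (since $\Pi=\{x_4=x_5=0\}$ and the line $\{x_3=x_4=x_5=0\}$ must map to themselves). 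Substituting into \eqref{eq:form-1} and matching monomials $x_3x_4^2$, $x_1x_2x_5$, $x_3^2x_5$, $x_4^2x_5$, $x_4x_5^2$, $x_5^3$ produces a system of multiplicative relations among $a,a',\lambda,\mu,\nu$ together with $e_i = (\text{scalar})\cdot e_i$ for each $i$. The monomial $x_1x_2x_5$ versus $x_3^2x_5$ gives $aa'\lambda = \mu^2\lambda$, fixing $\mu=\pm a^{1/2}(a')^{1/2}$; one then reads off that $\tau_a=\mathrm{diag}(a,a^{-1},1,1,1)$ and $\sigma_{(12)}$ always work, and the remaining freedom is a single diagonal torus/finite factor acting on $(x_4,x_5)$, say $x_4\mapsto \nu x_4$, $x_5\mapsto \lambda x_5$, subject to $\nu^2 = \lambda$ (from $x_3x_4^2$ vs $x_3^2x_5$ normalized), $e_1\nu^2 = e_1\lambda$, $e_2\nu = e_2\lambda$... — rather, the cleaner bookkeeping is: invariance forces $x_3x_4^2 \mapsto \mu\nu^2 x_3x_4^2$ and $x_1x_2x_5\mapsto aa'\lambda\, x_1x_2x_5$ to have equal coefficients, and each surviving $e_ix_4^{\,\cdot}x_5^{\,\cdot}$ monomial forces $e_i\cdot(\nu,\lambda\text{-weight}) = e_i\cdot\mu\nu^2$. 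The pattern of which $e_i$ vanish then dictates how large the residual cyclic group on $(x_4,x_5)$ can be: with all $e_i\neq 0$ only the identity survives (case (1)); with $e_2=0$ one gains $\eta_1=\mathrm{diag}(1,1,1,-1,1)$ (case (2)); with $e_1=e_3=0$ one gains $\eta_2=\mathrm{diag}(1,1,1,\zeta_3,\zeta_3^2)$ (case (3)); with $e_1=e_2=0$ one gains $\eta_3=\mathrm{diag}(1,1,1,\zeta_4,-1)$ (case (4)). Finally one checks these extra generators commute with $\tau_a$ and $\sigma_{(12)}$ and have the stated orders, identifying the groups as $\bG_m\rtimes C_2$, $C_2\times(\bG_m\rtimes C_2)$, $C_3\times(\bG_m\rtimes C_2)$, $C_4\times(\bG_m\rtimes C_2)$.

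For the case division itself I would argue on the $\bP(1,3,2)$-weighted point $[e_1:e_2:e_3]$: since a generic element of $\mathrm{Aut}(X)$ is a scaling acting on $(e_1,e_2,e_3)$ with distinct weights $(6,9,12)$, a configuration of $e_i$'s with "too many" nonzero entries has no extra stabilizer beyond $\bG_m\rtimes C_2$, while setting an $e_i$ to zero lifts the weight constraint and allows one extra finite symmetry whose order is governed by the gcd structure of the remaining weights — this is exactly why $e_2=0$ gives $C_2$, $e_1=e_3=0$ gives $C_3$ (stabilizing only $x_5^3$-monomial), $e_1=e_2=0$ gives $C_4$. One must also rule out configurations not on the list: if two of $e_1,e_2,e_3$ vanish but not as in (3) or (4) — e.g.\ $e_1=e_2=e_3$ all zero is excluded by the hypothesis that $\mathrm{Aut}(X)$ fixes no singular point (that was handled before \eqref{eq:form-1}), and $e_2=e_3=0$, $e_1\neq 0$ makes $X$ singular along a plane or reducible — and similarly mixed cases either reduce by further coordinate change to one on the list or contradict the no-fixed-singular-point hypothesis or irreducibility.

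The main obstacle I anticipate is not any single computation but the careful case analysis needed to be sure the list is exhaustive: one has to check that every specialization of $(e_1,e_2,e_3)$ compatible with the standing hypotheses (irreducible, not a cone, $C$ smooth, $\mathrm{Aut}(X)$ fixes no singular point, and the "general case" $\Pi\neq\Pi'$) is equivalent under the residual coordinate changes to exactly one of the four normal forms, and that in each the generic stabilizer $H$ of $\Pi$ — computed from the explicit matrix equations — contributes nothing beyond the listed torus and finite factors. The exact sequence $0\to H\to\mathrm{Aut}(X)\to \bG_m\rtimes C_2\to 0$ (with the quotient acting on $\Pi\cap\Pi'=\bP^1_{x_1,x_2}$, its image constrained because the two points of $C$ on this line form an invariant pair) organizes this, exactly as in Proposition~\ref{prop:s}; the finite part of $H$ is where the $C_2,C_3,C_4$ come from, and verifying it is neither larger nor smaller than claimed is the crux.
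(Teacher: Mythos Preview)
Your proposal is correct and follows the same strategy as the paper: exhibit $\tau_a,\sigma_{(12)}\in\Aut(X)$, argue that the image of $\Aut(X)$ acting on $C$ lands in the infinite dihedral group because the pair $\{[1:0:0:0:0],[0:1:0:0:0]\}$ is distinguished, form the exact sequence $0\to H\to\Aut(X)\to\bG_m\rtimes C_2\to 0$, and compute the kernel $H$ by direct monomial matching on the equation. The only difference is cosmetic --- the paper singles out those two points by observing that the singularity type of $X$ along $C$ jumps there (it is $\sA_1\times\bA^1$ at every other point of $C$), whereas you obtain them as $C\cap(\Pi\cap\Pi')$; both identifications yield the same pair. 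One minor inaccuracy: when $e_2=e_3=0$ and $e_1\ne 0$, the cubic $X$ is neither reducible nor singular along a plane as you suggest, but it does acquire the isolated singular point $[0:0:0:0:1]$, which $\Aut(X)$ then fixes, so that case is still correctly excluded by the hypothesis.
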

\begin{proof}
Note for general $e_1,e_2,e_3$, $X$ has $\sA_1\times\bA_1$-singularity at any point on the conic $C$ different from the two points
$$
[1:0:0:0:0],\quad [0:1:0:0:0]\in C,
$$
so the image of $G$ in $\Aut(C)=\PGL_2$ is contained in the infinite dihedral group $\bG_m\rtimes C_2$.
From the form of the equation, one sees that $\tau_a,\sigma\in\Aut(X)$ for all $a\in k^\times$ and $e_1,e_2,e_3\in k$, and thus generate the full group $\bG_m\rtimes C_2$. It follows that there is an exact sequence 
$$
0\to H\to \Aut(X)\to \bG_m\rtimes C_2\to 0,
$$
where $H$ is the generic stabilizer of $C$. Then a computation of $H$ based on the equation completes the proof. 
\end{proof}

\begin{prop}\label{prop:lineauto-2}
    Let $X$ be a cubic given by \eqref{eq:form-2} 
    $$
    x_5(x_1x_2+x_3^2)+x_4^3+e_1x_4x_5^2+e_2x_5^3=0,
    $$
    with parameters $e_1,e_2\in k$ such that $\Aut(X)$ does not fix any singular points of $X$. Then one of the following holds:
    \begin{enumerate}
    \item[(5)] $e_1,e_2\ne 0$, and $\Aut(X)=C_2\times\PGL_2$, where $\begin{pmatrix}
        a&b\\c&d
    \end{pmatrix}\in\PGL_2$ acts via 
    $$
    \begin{pmatrix}
        a^2&b^2&\zeta_4ab&&\\
        c^2&d^2&\zeta_4cd\\
        \frac{2ac}{\zeta_4}&   \frac{2bd}{\zeta_4}&ad+bc&&\\
        &&&ad-bc&\\
        &&&&ad-bc
    \end{pmatrix};
    $$  
    and $C_2$ acts via 
    $$
    \eta_1:\mathrm{diag}(1,1,1,-1,-1).
    $$
        \item[(6)] $e_1\ne 0$, $e_2=0$,  and $\Aut(X)=C_4\times\PGL_2$, generated by the $\PGL_2$-action described in case (5) and 
        $$
        \eta_2:\mathrm{diag}(1,1,1,\zeta_4,-\zeta_4).
        $$
          \item[(7)] $e_1=0, e_2\ne 0$, and $\Aut(X)=C_6\times\PGL_2$, generated by the $\PGL_2$-action described in case (5) and 
        $$
        \eta_3:\mathrm{diag}(1,1,1,-\zeta_3,-1)
        $$
    \end{enumerate}
\end{prop}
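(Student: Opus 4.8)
The plan is to follow the template of the proof of Proposition~\ref{prop:lineauto-1}, the key new feature being that here the \emph{entire} automorphism group $\Aut(C)\cong\PGL_2$ of the singular conic lifts to $X$. As recalled above, both $\Pi=\bP^2_{x_1,x_2,x_3}$ and the smooth conic $C=\{x_4=x_5=0\}\cap\{x_1x_2+x_3^2=0\}$ are $\Aut(X)$-invariant, so restriction to $C$ gives a homomorphism $\Aut(X)\to\Aut(C)=\PGL_2$ with kernel the generic stabilizer $H$ of $C$; thus there is an exact sequence $0\to H\to\Aut(X)\to\overline{G}\to0$ with $\overline{G}\subseteq\PGL_2$. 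The hypothesis that $\Aut(X)$ fixes no singular point of $X$ rules out $e_1=e_2=0$ (for which $[0:0:0:0:1]$ is the unique isolated, hence $\Aut(X)$-fixed, singular point), leaving exactly the three cases of the statement. The proof then has two parts: produce an explicit section $\PGL_2\hookrightarrow\Aut(X)$, so that $\overline{G}=\PGL_2$, and compute $H$.

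For the section, recall that $C\subset\Pi$ is a rational normal curve of degree $2$, so $\Aut(C)\cong\PGL_2$ acts on $\Pi$ through the image of the $\Sym^2$-representation $\PGL_2\to\PGL_3$. In the coordinates in which $C$ is cut out by $q_4=x_1x_2+x_3^2$, the lift of $\begin{pmatrix}a&b\\c&d\end{pmatrix}\in\GL_2$ is the displayed $3\times3$ block acting on $(x_1,x_2,x_3)$, and it rescales $q_4$ by $(ad-bc)^2$. One extends this action to $\bP^4$ by letting $\GL_2$ act on $x_4$ and $x_5$ through the character $ad-bc$, i.e.\ by the lower block $\mathrm{diag}(ad-bc,ad-bc)$. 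Then each of the four monomials $x_5q_4,\ x_4^3,\ x_4x_5^2,\ x_5^3$ of \eqref{eq:form-2} is rescaled by the single factor $(ad-bc)^3$, so $X$ is preserved for every value of $e_1,e_2$; since this assignment is homogeneous of degree $2$ in $a,b,c,d$ (equivalently, $-\mathrm{id}\in\SL_2$ acts trivially), it descends to a faithful action of $\PGL_2$ on $\bP^4$ restricting to the tautological action on $C$. This is the asserted section, reproducing the matrix displayed in case (5). Hence $\overline{G}=\PGL_2$ and the sequence splits; as a byproduct, $\PGL_2$ acts transitively on $C$, so $X$ has constant singularity type $\sA_2\times\bA^1$ along $C$ (as one checks by a one-line local computation), which is why $C$ carries no distinguished point.

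It remains to compute $H$. An element $g\in H$ preserves $\Pi$ and restricts to an automorphism of $\Pi$ fixing $C$ pointwise; since $C$ spans $\Pi$ and contains four points in general position, $g$ is the identity on $\Pi$. After rescaling, $g$ therefore acts by $x_i\mapsto x_i+\ell_i(x_4,x_5)$ for $i=1,2,3$ and preserves the plane $\langle x_4,x_5\rangle$, acting on it by some $D\in\GL_2$. Substituting into \eqref{eq:form-2}, and using that that equation has no monomial of degree one in $x_1,x_2,x_3$ while the only such monomials produced by $g$ are those in $(g^\ast x_5)(x_1\ell_2+x_2\ell_1+2x_3\ell_3)$, forces $\ell_1=\ell_2=\ell_3=0$; comparing the $q_4$-part then gives $g^\ast x_5=cx_5$ for some $c\in k^\times$, and matching the coefficients of the binary cubic $x_4^3+e_1x_4x_5^2+e_2x_5^3$ forces $g^\ast x_4=\mu x_4$ with $c=\mu^3$, together with $\mu^4=1$ when $e_1\ne0$ and $\mu^6=1$ when $e_2\ne0$. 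Thus $H=\langle\mathrm{diag}(1,1,1,\mu,\mu^3)\rangle$ is cyclic of order $2$, $4$, $6$ in cases (5), (6), (7), generated respectively by $\eta_1,\eta_2,\eta_3$. Finally, $H$ is trivial on $\Pi$ and diagonal on $\langle x_4,x_5\rangle$, while the $\PGL_2$ above acts on $\langle x_4,x_5\rangle$ by homotheties; the two commute and intersect trivially, so $\Aut(X)=H\times\PGL_2$, as asserted.

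I expect the first part — identifying the explicit $\PGL_2$-embedding — to be the only genuine obstacle: one has to set up the Veronese parametrization of $C$ and the normalization of $x_4,x_5$ so that $q_4$ and all the monomials of \eqref{eq:form-2} are rescaled simultaneously by the single character $(ad-bc)^3$, and this is precisely where the special shape $\Pi=\Pi'$ underlying \eqref{eq:form-2} enters. The computation of $H$ is routine linear algebra, turning on the absence of monomials linear in $x_1,x_2,x_3$ in the defining equation.
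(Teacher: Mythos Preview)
Your proposal is correct and follows exactly the approach of the paper's own proof, which consists of three sentences: observe that $\PGL_2\subset\Aut(X)$ via the displayed matrices, compute the generic stabilizer $H$ of $C$ directly from the equation, and note that $H$ commutes with $\PGL_2$. You have supplied precisely the details the paper elides --- the Veronese description of the $\Sym^2$-lift showing each monomial of \eqref{eq:form-2} scales by $(ad-bc)^3$, the vanishing of the $\ell_i$ from the absence of a linear part in $x_1,x_2,x_3$, and the cyclic constraints $\mu^4=1$, $\mu^6=1$ coming from the binary cubic in $x_4,x_5$ --- and these details check out.
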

\begin{proof}
In this case, we observe that $\PGL_2\subset\Aut(X)$ with the generators given above.
   As before, one can directly compute the generic stabilizer $H$ of $C$ to obtain the three cases in the assertions. Note that $H$ always commutes with $\PGL_2$.
\end{proof}

We turn to the problem of linearization. 

\begin{theo} 
\label{thm:12}
    Let $X$ be the cubic given by \eqref{eq:form-1} or \eqref{eq:form-2} and $G=\mathfrak D_{n}$ the dihedral group generated by $\tau_a$ and $\sigma_{(12)}$ with $a=\zeta_{n}$, for some even $n$. Then the $G$-action on $X$ is not linearizable.  In particular, the $\Aut(X)$-action on $X$ is not linearizable. 
\end{theo}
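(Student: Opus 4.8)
The plan is to prove non-linearizability by exhibiting an obstruction that is invariant under $G$-equivariant birational maps. My first instinct is to use the fact established earlier in the paper that the unprojection gives an $\Aut(X)$-equivariant birational map $X \dashrightarrow X_{2,2}$, where $X_{2,2}$ is an intersection of two quadrics in $\bP^5$ with the image of $\Pi$ a distinguished line, and the quadric surface bundle structure $\varpi|_{\tilde X}\colon \tilde X \to \bP^1_{x_4,x_5}$. So $G$-linearizability of $X$ is equivalent to $G$-linearizability of $\tilde X$ with its conic/quadric-bundle structure over $\bP^1$. The natural obstruction machinery here is the Burnside-group / intermediate-Jacobian style invariant, or — given the explicit structure — the method of intermediate Jacobian torsors (the "DR" circle of ideas, or Hassett–Tschinkel–type obstructions for conic bundles), applied to the quadric surface bundle over $\bP^1$.

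Concretely, I would first reduce to studying the $G$-action, $G = \mathfrak{D}_n$ with $n$ even, on the quadric surface bundle $\varpi|_{\tilde X}\colon \tilde X \to \bP^1_{x_4,x_5}$. Here $\tau_a$ with $a=\zeta_n$ acts on the $\bP^1$-base trivially (it only scales $x_1,x_2$) while $\sigma_{(12)}$ also acts trivially on $\bP^1_{x_4,x_5}$, so in fact $G$ acts trivially on the base $\bP^1$; the whole action is fibered. The cyclic group $C_{n/2}\subset \mathfrak{D}_n$ generated by $\tau_a^2 = \mathrm{diag}(a^2,a^{-2},1,1,1)$ acts on each quadric surface fiber, and $\sigma_{(12)}$ swaps the two rulings / the two nodal sections. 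I would then locate the $G$-fixed locus and the stabilizers of the special fibers (the discriminant of the quadric bundle, which over $\bP^1$ is a divisor whose degree I can read off from \eqref{eq:form-1} or \eqref{eq:form-2}), and feed this into a "class group of the generic fiber"-type computation: the generic fiber is a quadric surface over $k(\bP^1)$, i.e. a form of $\bP^1\times\bP^1$, and the $G = \mathfrak D_n$-equivariant birational type of such a conic/quadric bundle is governed by an $H^1$ or equivariant-Burnside-group obstruction that is nonzero precisely when $n$ is even — this is where the parity hypothesis must enter, exactly as in the classical situation where $\mathfrak D_n$ acting on $\bP^1$ via $\tau_{\zeta_n}, \sigma_{(12)}$ is linearizable iff $n$ is odd, or more precisely where the residual $C_2$-action after quotienting by $\bG_m$ is the source of the obstruction.

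The key steps, in order: (1) replace $X$ by the quadric surface bundle $\tilde X \to \bP^1$ via the $\Aut(X)$-equivariant birational maps already constructed, so linearizability of the $G$-action on $X$ is equivalent to that on $\tilde X$; (2) identify the $G$-action on $\tilde X$ explicitly — base action trivial, fiberwise action of $\mathfrak D_n$ with $\tau_a^2$ acting by scaling and $\sigma_{(12)}$ interchanging the two families of lines in the fibers; (3) compute the relevant birational obstruction — I would use the Burnside group $\Burn_3(G)$ or the intermediate Jacobian torsor obstruction of \cite{DR}, evaluating the symbol contributed by the generic quadric surface fiber together with its $\mathfrak D_n$-action and the contributions of the components of the discriminant locus; (4) show this class is nonzero when $n$ is even, by comparing with the class of any linear $\mathfrak D_n$-action on $\bP^3$ (whose analogous invariant I compute directly and show to be trivial or to lie in a different class); (5) conclude, and note the $\Aut(X)$-statement follows since $G\subseteq \Aut(X)$ and linearizability of a group action forces linearizability of all subgroup actions.

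The main obstacle I anticipate is step (3)–(4): correctly setting up and evaluating the birational invariant for a quadric surface bundle over $\bP^1$ with a non-abelian fiberwise group action. The quadric surfaces are three-dimensional in the sense of being 2-dimensional fibers in a 3-fold, so one is genuinely in the $\Burn_3$ regime, and one must carefully handle the incidence data — stabilizers of fiber components, the ruling-swapping involution $\sigma_{(12)}$, and the behavior over the discriminant divisor — to get a residue symbol that is manifestly a birational invariant. The parity of $n$ should surface as the condition for a certain class in $H^1(C_2, \Pic)$ or in a Burnside symbol to be non-split; pinning down exactly which cohomological term carries it, and verifying that it cannot be killed by any further equivariant blowups or by the $\bG_m$ that is present in the full $\Aut(X)$ but not in $G$, is the delicate part. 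I would cross-check the final answer against the open cases flagged in the introduction (the odd-$n$ and more exotic subgroups, where the method should fail to produce an obstruction, consistent with the problem being left open).
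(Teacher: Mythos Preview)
Your approach is substantially more elaborate than the paper's, and it misses the key simplification that makes the argument essentially two lines. The paper does not analyze the quadric surface bundle at all. Instead it observes that since $n$ is even, $\tau_{-1}=\tau_{\zeta_n}^{n/2}\in G$, so $G$ contains the Klein four-group $C_2^2=\langle\tau_{-1},\sigma_{(12)}\rangle$. Since linearizability passes to subgroups (a fact you invoke only at the very end, for $G\subset\Aut(X)$), it suffices to show that this $C_2^2$-action is not linearizable. For that, one checks directly that $\sigma_{(12)}$ pointwise fixes the hyperplane section $S=X\cap\{x_1=x_2\}$, a cubic surface, and that the residual involution $\tau_{-1}$ restricted to $S$ fixes a smooth plane cubic (hence genus-one) curve. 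This configuration is exactly the input to \cite[Proposition~2.6]{CTZ-cubic}, which gives non-linearizability.

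So the parity of $n$ enters not through an $H^1(C_2,\Pic)$ computation on the quadric bundle, but simply as the condition $\tau_{-1}\in G$. Your plan to run a Burnside or intermediate-Jacobian-torsor computation on the full $\mathfrak{D}_n$-action on the quadric surface bundle is not wrong in spirit --- such an obstruction, correctly set up, would ultimately detect the same $C_2^2$ phenomenon --- but steps (3)--(4) as written are too vague to constitute a proof, and you would be re-deriving by hand what \cite{CTZ-cubic} already packages. The moral: when looking for non-linearizability, first pass to the smallest subgroup that could carry the obstruction, then look for a fixed-locus criterion; here that subgroup is $C_2^2$ and the criterion is an elliptic curve in the fixed cubic surface.
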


\begin{proof}
The group $G$ contains the Klein four-group $\langle \tau_{-1}, \sigma_{(12)}\rangle$ where $\sigma_{12}$ fixes a cubic surface and the residual $C_2$ acting on $S$
fixes a smooth elliptic curve. The assertion then follows from \cite[Proposition 2.6]{CTZ-cubic}.
\end{proof}

\begin{rema}
Forms over non-closed fields of cubics given by \eqref{eq:form-2} have been considered in 
\cite[Section 11.3]{Colliot-P}. In the birational models as intersections of two quadrics in $
\bP^5$, the singular locus is a conic without rational points. The rationality of such threefolds remains an open problem. 

On the other hand, in the equivariant context, for the action of $G=C_2^2$, the corresponding conic has no $G$-fixed points; and the $G$-action is not linearizable, by Theorem~\ref{thm:12}. 
\end{rema}

\begin{theo}\label{thm:lineuni1}
    Let $X$ be a cubic given by \eqref{eq:form-1} or  \eqref{eq:form-2}. Then $X$ is $G$-unirational for all finite $G\subseteq\Aut(X)$.
\end{theo}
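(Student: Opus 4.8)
The plan is to apply Proposition~\ref{prop:unirat} to the plane $\Pi=\{x_4=x_5=0\}$ that was singled out above. Indeed, for both normal forms \eqref{eq:form-1} and \eqref{eq:form-2} the defining cubic vanishes identically on $\Pi$, so $\Pi\cong\bP^2_{x_1,x_2,x_3}$ is a plane contained in $X$; and we already observed that $\Pi$ — equivalently, the linear span $\langle x_1,x_2,x_3\rangle$ — is preserved by $\Aut(X)$. Hence $\Pi$ is a $G$-invariant subvariety of $X$ for every subgroup $G\subseteq\Aut(X)$, and the whole point will be to feed this $S=\Pi$ into Proposition~\ref{prop:unirat}.

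To apply that proposition we must check its hypotheses: $X$ is an irreducible cubic threefold which is not a cone (part of the running assumptions on these normal forms), $n=4\ge 3$, and $G$ acts linearly on $\bP^4$. The linearity is immediate from the explicit descriptions of $\Aut(X)$ in Propositions~\ref{prop:lineauto-1} and~\ref{prop:lineauto-2}: in the cases of \eqref{eq:form-1} all the listed generators are diagonal or permutation matrices, hence already elements of $\GL_5(k)$; in the cases of \eqref{eq:form-2} the only non-diagonal ingredient is the $\PGL_2$-factor, and the preimage $\tilde G\subseteq\SL_2$ of a finite $G\subseteq\PGL_2$ is a double cover on which $ad-bc=1$ and $-I$ acts trivially on both blocks of the displayed $5\times5$ matrix, so that $\tilde G$-representation factors through $G$ and furnishes an honest linear $G$-action on $k^5$. (Alternatively, if $G$ happened to fix a singular point of $X$, the $G$-action would be linearizable by projection from that point, hence $G$-unirational, and we would be done; so we may assume it does not.) Once $G$ acts linearly, $W=\langle x_1,x_2,x_3\rangle$ is a $G$-subrepresentation, $\Pi=\bP(W)$ is the projectivization of a linear representation of $G$, and therefore $\Pi$ is tautologically $G$-unirational, being dominated by itself via the identity. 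Proposition~\ref{prop:unirat} applied with $S=\Pi$ then yields that $X$ is $G$-unirational.

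The only step requiring a moment's care is the verification that the $G$-action on $\bP^4$ is linear in the sense of Section~\ref{sect:gen}; beyond that the argument is essentially immediate, since the substantive work — producing the normal forms \eqref{eq:form-1}, \eqref{eq:form-2} and computing the automorphism groups in Propositions~\ref{prop:lineauto-1} and~\ref{prop:lineauto-2} — has already been carried out. I do not expect any genuine obstacle here.
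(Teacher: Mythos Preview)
Your proof is correct and follows exactly the paper's approach: apply Proposition~\ref{prop:unirat} to the $G$-invariant plane $\Pi=\{x_4=x_5=0\}\subset X$. The paper's own proof consists of essentially the same two sentences; your additional verification that the $G$-action on $\bP^4$ is genuinely linear (in particular the observation that $-I\in\SL_2$ acts trivially on the displayed $5\times 5$ matrix, so the $\PGL_2$-block lifts) is a worthwhile detail that the paper leaves implicit.
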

\begin{proof}
   The plane $\Pi\subset X$ (spanned by the conic $C$) is necessarily $G$-invariant. 
 The assertion then follows from Proposition~\ref{prop:unirat}.
\end{proof}

\section{The chordal cubic}\label{sect:chordal}
There is a unique cubic threefold
$X\subset \bP^4$ such that  $\Sing(X)$ is a rational normal quartic curve. It is known as the {\em chordal cubic}, and is given by 
\begin{align}\label{eqn:chordal}
    X=\{x_1x_4^2 + x_2^2x_5 - x_1x_3x_5 - 2x_2x_3x_4 + x_3^3=0\}\subset\bP^4_{x_1,\ldots,x_5}.
\end{align}
This $X$ is the secant variety of its singular locus $C:=\Sing(X)$. 
We have 
$$
\Aut(X)=\PGL_2,
$$ 
where $\PGL_2$ acts on $\bP^4$ via the usual embedding $C=\bP^1\hookrightarrow \bP^4$.
The linear system $|\cO_X(2)-C|$ gives rise to a $\PGL_2$-equivariant rational map $\rho: X\dashrightarrow S$,
where $S=\bP^2$ is the Veronese surface in $\bP^5$. This map fits into the following 
$\PGL_2$-equivariant commutative diagram:
$$
\xymatrix{
      & \tilde{X} \ar[dl]_{\beta} \ar[dr]^{\varpi}&         \\
X \ar@{-->}[rr]^{\rho} &  &  S}
$$
where $\beta$ is the blowup of $C$ in $X$ and $\varpi$ is a $\bP^1$-bundle. 

Let $G\subset \Aut(X)$ be a finite subgroup. Then $G$ is one of the following 
$$
C_n,\quad\fD_n,\quad \fA_4,\quad \fS_4,\quad \text{and}\quad\fA_5.
$$
For all such $G$, the induced action on $S$ is generically free, and linearizable. Moreover, the $G$-action on $X$ satisfies Condition {\bf (A)}, indeed, the Klein four-group 
$C_2^2\subset \PGL_2$, acting via
$$
\mathrm{diag}(1,-1,1,-1,1)\quad \text{and}\quad (\mathbf x)\mapsto (x_5,x_4,x_3,x_2,x_1),
$$
fixes the smooth point $[1:0:1:0:1]\in X$, and all other abelian groups are cyclic. 
Applying Proposition~\ref{prop:25}, we obtain linearizability of the $G$-action on $X$. To summarize, we obtain

\begin{theo}\label{theo:chordalmain}
    Let $X$ be the chordal cubic threefold given by \eqref{eqn:chordal}, and $G\subset \Aut(X)=\PGL_2$ a finite subgroup. Then the $G$-action on $X$ is linearizable. 
\end{theo}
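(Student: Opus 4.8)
The plan is to use the $\PGL_2$-equivariant diagram displayed above to realize $X$, $G$-equivariantly birationally, as the total space of a $\bP^1$-bundle over a $\bP^2$ carrying the standard linear $\PGL_2$-action, and then to apply Proposition~\ref{prop:25}. Concretely: the blowup $\beta\colon\tilde X\to X$ of $C=\Sing(X)$ and the $\bP^1$-bundle $\varpi\colon\tilde X\to S$ onto the Veronese surface $S\cong\bP^2$ are both $\PGL_2$-equivariant, so $X$ is $G$-birational to $\tilde X$, which is a $\bP^1$-bundle over $S$. The $\PGL_2$-action on $S$ is the action on binary quadratic forms up to scale, i.e. the embedding $\PGL_2\hookrightarrow\PGL_3$; it is linear and faithful. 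Hence for every finite $G\subseteq\PGL_2$ the induced action on $S$ is linear, in particular linearizable, and generically free, since the fixed locus of any nontrivial element of a finite group is a proper closed subvariety of the irreducible surface $S$.

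The remaining issue is that a $G$-equivariant $\bP^1$-bundle need not be the projectivization of a $G$-linearized rank-two bundle; this is exactly what Condition \textbf{(A)} controls, and I would verify it by hand. By the classification recalled above, $G$ is one of $C_n$, $\fD_n$, $\fA_4$, $\fS_4$, $\fA_5$, and the only non-cyclic abelian subgroups occurring in this list are Klein four-groups $C_2^2$ (inside $\fD_n$ with $n$ even, and inside $\fA_4,\fS_4,\fA_5$), all of which are conjugate in $\PGL_2$. So to check Condition \textbf{(A)} — that every non-cyclic abelian subgroup of $G$ fixes a smooth point of $X$ — it suffices to exhibit one $C_2^2\subset\PGL_2$ with a smooth fixed point on $X$. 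I would take the group generated by $\mathrm{diag}(1,-1,1,-1,1)$ and $(\mathbf x)\mapsto(x_5,x_4,x_3,x_2,x_1)$, check that these two matrices commute and preserve the equation \eqref{eqn:chordal} (so they lie in $\Aut(X)=\PGL_2$ and generate a $C_2^2$), and verify that $[1:0:1:0:1]$ satisfies \eqref{eqn:chordal}, is fixed by both generators, and does not lie on the rational normal quartic $C$, hence is a smooth point of $X$. All other abelian subgroups of $G$ are cyclic, so Condition \textbf{(A)} holds.

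Condition \textbf{(A)} is precisely the hypothesis that upgrades $\varpi$ to the projectivization $\bP_S(\cE)$ of a $G$-linearized rank-two bundle on $S$: the obstruction to such a lift lies in $H^2(G,k^\times)$ and is detected on the non-cyclic abelian subgroups, where the smooth $C_2^2$-fixed point (equivalently, its preimage in $\tilde X$, since $\beta$ is an isomorphism away from $C$) annihilates it. Since the $G$-action on the base $S\cong\bP^2$ is linearizable and generically free, the second assertion of Proposition~\ref{prop:25} then yields that the $G$-action on $X$ is linearizable. The only non-formal step, and hence the main obstacle, is exactly this passage from the base to the total space — producing the $G$-linearized vector bundle — everything else being a direct consequence of the equivariant diagram and the subgroup classification; the substantive input is the explicit smooth $C_2^2$-fixed point exhibited above.
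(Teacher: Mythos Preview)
Your proposal is correct and follows essentially the same route as the paper: both use the $\PGL_2$-equivariant $\bP^1$-bundle $\varpi\colon\tilde X\to S\cong\bP^2$, observe that the induced action on $S$ is linear and generically free, verify Condition~\textbf{(A)} via the explicit Klein four-group generated by $\mathrm{diag}(1,-1,1,-1,1)$ and $(\mathbf x)\mapsto(x_5,x_4,x_3,x_2,x_1)$ with smooth fixed point $[1:0:1:0:1]$, and then invoke Proposition~\ref{prop:25}. Your write-up is in fact slightly more explicit than the paper's in spelling out why Condition~\textbf{(A)} is the relevant hypothesis (the $H^2(G,k^\times)$ obstruction to $G$-linearizing the rank-two bundle, detected on non-cyclic abelian subgroups), but the argument is the same.
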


\begin{rema}
We note that although all finite subgroups of $\Aut(X)$ in this case are linearizable, the $\Aut(X)$-action on $X$ is not linearizable. 
Indeed, from \cite[Section 2]{parkpoly}, we see that the $\bP^1$-bundle $\varpi: \tilde X\to\bP^2$ is the Schwarzenberger bundle $\mathcal S_3$, in the notation of \cite[Definition 1.2.7]{blancfanellibundle}. Precisely, it is the projectivization of the pullback of $\cO_{\bP^1\times\bP^1}(0,4)$ under the double cover $\bP^1\times\bP^1\to\bP^2$ ramified along a conic. The connected component of the identity $\Aut^\circ(\tilde X)=\PGL_2$. By \cite[Theorem F]{blancfanelliconnect}, there is no $\Aut^\circ(\tilde X)$-equivariant birational map to $\bP^3$.
\end{rema}

\section{Plane}
\label{sect:plane}

By \cite[Proposition 4.2]{yokocub}, a cubic threefold $X$ singular along the plane 
$$
\Pi=\{x_4=x_5=0\} \subset \bP^4
$$
is given by 
$$
x_1q_1(x_4,x_5)+x_2q_2(x_4,x_5)+x_3q_3(x_4,x_5)+c(x_4,x_5)=0,
$$
where $q_1,q_2,q_3$ are quadratic forms and $c$ is a cubic form in $x_4,x_5$. Viewing the first three terms as a quadratic form in $x_4$, $x_5$, over $k(x_1,x_2,x_3)$, we find that its discriminant defines a conic $C\subset \Pi=\bP^2_{x_1,x_2,x_3}$. If $C$ is non-reduced or singular then there is a distinguished point on $\Pi$ fixed by any group action, implying the linearizability. Thus, we may assume that $C$ is smooth.

\begin{prop}\label{prop:planeauto}
    Let $X$ be a cubic threefold singular along a plane. Assume that $\Aut(X)$ does not fix any singular points of $X$. Then, up to isomorphism, $X$ is given by 
    \begin{align}\label{eqn:sinplane}
x_1x_4^2+x_2x_4x_5+x_3x_5^2=0,
    \end{align}
    and $\Aut(X)$ is generated by elements 
    \begin{align}\label{eqn:GL2}
{\bf (x)} \mapsto 
{\bf (x)}\cdot\begin{pmatrix}
 d^2  &  -2bd     &  b^2  &     & \\
-cd & (ad+bc) & -ab &     & \\
 c^2 & -2ac     & a^2  &     & \\
      &         &      & a   & c \\
      &         &      & b   & d 
\end{pmatrix},\quad ad-bc\neq 0,
\end{align}
and 
    \begin{align}\label{eqn:stabi}
{\bf (x)} \mapsto 
{\bf (x)}\cdot\begin{pmatrix}
1 &  0    & 0  &     & \\
0 & 1 & 0 &     & \\
0& 0     & 1 &     & \\
    0  &   -\alpha      & -\beta    & \gamma   & 0 \\
    \alpha  & \beta        &    0  & 0   & \gamma 
\end{pmatrix},\quad \alpha,\beta\in k, \gamma\in k^\times.
\end{align}
\end{prop}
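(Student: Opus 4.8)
The plan is to first normalize $X$ by an explicit coordinate change, and then to compute $\Aut(X)$ (understood, as throughout, as the group of linear automorphisms of $\bP^4$ preserving $X$) by restricting it to the canonically attached pair consisting of the plane $\Pi=\Sing(X)$ and the discriminant conic $C\subset\Pi$, which realizes $\Aut(X)$ as an extension of $\PGL_2$. For the normal form, write $q_i=a_ix_4^2+b_ix_4x_5+c_ix_5^2$ and regroup the first three terms of the normal form recalled above as $L_1x_4^2+L_2x_4x_5+L_3x_5^2$, where $L_1=\sum a_ix_i$, $L_2=\sum b_ix_i$, $L_3=\sum c_ix_i$ are linear in $x_1,x_2,x_3$ and $C=\{L_2^2-4L_1L_3=0\}$. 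If $L_1,L_2,L_3$ were linearly dependent this quadratic form would involve at most two variables and $C$ would be singular, contrary to our hypothesis; hence $L_1,L_2,L_3$ form a basis, and a linear change in $x_1,x_2,x_3$ turns the first three terms into $x_1x_4^2+x_2x_4x_5+x_3x_5^2$. Finally $c(x_4,x_5)=\ell_1x_4^2+\ell_3x_5^2$ for suitable linear forms $\ell_1,\ell_3$ in $x_4,x_5$, and substituting $x_1\mapsto x_1-\ell_1$, $x_3\mapsto x_3-\ell_3$ (which fixes $\Pi$) removes it, yielding \eqref{eqn:sinplane}.

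For $F=x_1x_4^2+x_2x_4x_5+x_3x_5^2$ one checks directly that $\Sing(X)=\Pi=\{x_4=x_5=0\}$ and that the leading form of $F$ at a point $[p_1:p_2:p_3:0:0]\in\Pi$ is the binary quadratic $p_1x_4^2+p_2x_4x_5+p_3x_5^2$, which has rank $\le 1$ precisely over $C=\{x_2^2-4x_1x_3=0\}$ and rank $2$ elsewhere; thus $X$ has a transverse $\sA_1$-singularity along $\Pi\setminus C$ and worse singularities along $C$. Consequently $\Pi$ and $C\subset\Pi$ are intrinsic to $X$, so $\Aut(X)$ preserves the pair $(\Pi,C)$, and restriction gives a homomorphism
$$
r\colon \Aut(X)\longrightarrow \Aut\bigl(\bP^2_{x_1,x_2,x_3},\,C\bigr).
$$
Since a smooth plane conic is the image of the Veronese $\bP^1\hookrightarrow\bP^2$ and every automorphism of $\bP^2$ preserving it is the $\Sym^2$ of an automorphism of that $\bP^1$, we have $\Aut(\bP^2,C)\cong\PGL_2$.

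Next I would establish surjectivity of $r$ together with a lift. One verifies that the assignment \eqref{eqn:GL2} defines a homomorphism $\GL_2\to\GL_5$ preserving $F$ — its $3\times 3$ block is the $\Sym^2$-representation on $\bP^2=\bP(\Sym^2 k^2)$ and its $2\times 2$ block the standard one — hence it lands in $\Aut(X)$, and composing with $r$ recovers the quotient map $\GL_2\to\PGL_2$; so $r$ is onto and \eqref{eqn:GL2} lifts all of $\PGL_2$. For the kernel, an element $g\in\ker r$ preserves the linear subspace $\Pi$ and acts trivially on it, so after rescaling it is represented by a matrix with identity $3\times 3$ block, zero upper-right $3\times 2$ block, and arbitrary lower blocks $B$ (size $2\times 3$) and $D$ (size $2\times 2$). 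Imposing $F\circ g\in k^\times F$ and comparing the coefficients of the degree-$3$ monomials in $x_4,x_5$ forces $D=\gamma I_2$ with $\gamma\in k^\times$ and $B=\left(\begin{smallmatrix}0 & -\alpha & -\beta\\ \alpha & \beta & 0\end{smallmatrix}\right)$ with $\alpha,\beta\in k$; that is, $\ker r$ is exactly the group \eqref{eqn:stabi} (abstractly $\bG_a^2\rtimes\bG_m$). Because $r$ is onto and the image of \eqref{eqn:GL2} already surjects onto $\PGL_2$, every $g\in\Aut(X)$ factors as an element of $\ker r$ times an element of that image, so $\Aut(X)$ is generated by \eqref{eqn:GL2} and \eqref{eqn:stabi}.

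The geometric scaffolding is routine; the real work lies in the two explicit computations — confirming that \eqref{eqn:GL2} preserves $F$, and carrying out the coefficient comparison in the kernel step precisely enough to see that exactly the parameters $\alpha,\beta,\gamma$ survive. The one conceptual point needing care is that the block-triangular shape of elements of $\ker r$ (and hence the finiteness of that computation) rests on $\Pi$ being the \emph{whole} singular locus of $X$, which is what guarantees it is $\Aut(X)$-invariant in the first place.
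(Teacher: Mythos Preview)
Your proof is correct and follows essentially the same strategy as the paper's: normalize the equation using smoothness of the discriminant conic $C$, observe that $\Aut(X)$ preserves the pair $(\Pi,C)$ to obtain the restriction map to $\PGL_2$, exhibit the $\GL_2$-lift \eqref{eqn:GL2} for surjectivity, and compute the kernel directly to obtain \eqref{eqn:stabi}. Your write-up is in fact more explicit than the paper's (which simply asserts ``a direct computation shows''), particularly in the normalization step where you spell out why $L_1,L_2,L_3$ are independent and how $c(x_4,x_5)$ is absorbed.
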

\begin{proof}
    Recall that $\Aut(X)$ preserves the conic $C\subset \Pi$. Up to isomorphism, we may assume that
$$
C=\{4x_1x_3-x_2^2=x_4=x_5=0\}.
$$
Since $X$ does not contain a distinguished singular point on $\Pi$, we may change variables to obtain the unique equation \eqref{eqn:sinplane} for $X$.

Since $\Aut(X)$ preserves $C$, its effective action on $\Pi$ is a subgroup of $\PGL_2$. We note that $\Aut(X)$ contains a subgroup isomorphic to 
$$
\GL_2=\left\{ \begin{pmatrix} a& c \\ b& d\end{pmatrix}, \quad ad-bc\neq 0\right\}, 
$$ 
generated by elements of the form \eqref{eqn:GL2}, which acts via $\PGL_2$ on $\Pi$. It follows that there is an exact sequence
$$
1\to H\to \Aut(X)\to \PGL_2\to 1,
$$
where $H$ is the generic stabilizer of $\Pi$. A direct computation shows that $H$ is generated by elements of the form \eqref{eqn:stabi}.


\end{proof}

Proposition~\ref{prop:planeauto} implies that $\mathrm{Aut}(X)$ is isomorphic to the subgroup in $\GL_3$ consisting of $3\times 3$ matrices
$$
\left(
      \begin{array}{ccc}
        a & c &0\\
        b & d &0 \\
        \alpha& \beta & 1 \\
      \end{array}
    \right).
$$
This can also be explained as follows.
Let $f\colon\tilde{X}\to X$ be the blowup of $\Pi$, and let $E$ be the preimage of $\Pi$ in $\tilde{X}$. Then \begin{equation}
    \label{eqn:tildex}
\tilde{X}=\mathbb{P}^1_{y_1,y_2}\times\mathbb{P}^2_{z_1,z_2,z_3}.
\end{equation}
We may assume that 
$E$ is given by $z_3=0$, so that we can identify $E=\mathbb{P}^1_{y_1,y_2}\times\mathbb{P}^1_{z_1,z_2}$.
Then 
$$
\mathrm{Aut}(X)=\mathrm{Aut}(\tilde{X},E),
$$
where the latter group consists of automorphisms
\begin{multline*} 
(y_1,y_2)\times(z_1,z_2,z_3)\mapsto  \\(ay_1+by_2,cy_1+dy_2)\times(az_1+bz_2+\alpha z_3,cz_1+dz_2+\beta z_3,z_3).
\end{multline*}
The birational morphism $f$ is equivariant with respect to the described actions of $\mathrm{Aut}(\tilde{X},E)$, and can be explicitly presented as follows. Consider the Segre embedding  $\mathbb{P}^1_{y_1,y_2}\times\mathbb{P}^2_{z_1,z_2,z_3}\hookrightarrow\mathbb{P}^5$ given by
$$
(y_1,y_2)\times(z_1,z_2,z_3)\mapsto (y_1z_1,y_1z_2,y_1z_3,y_2z_1,y_2z_2,y_2z_3).
$$
Composing it with the linear projection $\mathbb{P}^5_{t_1,t_2,t_3,t_4,t_5,t_6}\dasharrow\mathbb{P}^4$ 
given by 
\begin{align}\label{eqn:projpot}
    (\mathbf t)\mapsto (t_1,-t_2-t_4,t_5,t_6,t_3),
\end{align}
we obtain the map $f$; the resulting image of $\bP^1\times\bP^2$ in $\bP^4$ is the cubic $X$ given by \eqref{eqn:sinplane}. 
The map \eqref{eqn:projpot} is a projection from the point 
$$
[0:1:0:-1:0:0]\in \bP^5.
$$
This point is not in the image of $\bP^1\times\bP^2$, but it is fixed by the image of $\Aut(\tilde X,E)$ in $\PGL_6$. Note that $f$ induces a double cover $E\to \Pi$ which is ramified in $C$, which explains why this conic is $\mathrm{Aut}(X)$-invariant.

\begin{theo}\label{thm:plane}
Let $X$ be the cubic threefold given by \eqref{eqn:sinplane} and $G\subset \Aut(X)$ a finite group. 
Then the $G$-action on $X$ is linearizable. 

\end{theo}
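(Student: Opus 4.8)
The plan is to exploit the description $X \simeq \Bl_{\text{pt}}^{-1}$ — more precisely, the fact established just above that $X$ is the image of $\bP^1_{y_1,y_2}\times\bP^2_{z_1,z_2,z_3}$ under an explicit $\Aut(\tilde X, E)$-equivariant birational projection, where $\tilde X = \bP^1\times\bP^2$ and $\Aut(X) = \Aut(\tilde X, E)$ acts by the formula
$$
(y_1,y_2)\times(z_1,z_2,z_3)\mapsto (ay_1+by_2, cy_1+dy_2)\times(az_1+bz_2+\alpha z_3, cz_1+dz_2+\beta z_3, z_3).
$$
Since $f\colon \tilde X \to X$ is an $\Aut(X)$-equivariant birational morphism, the $G$-action on $X$ is equivariantly birational to the $G$-action on $\bP^1\times\bP^2$ given by the above formula, so it suffices to linearize the latter. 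I would therefore spend the proof working entirely on $\tilde X = \bP^1_{y_1,y_2}\times\bP^2_{z_1,z_2,z_3}$.

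The key structural observation is that the projection $\pi_1\colon \tilde X \to \bP^1_{y_1,y_2}$ is $\Aut(X)$-equivariant: the $\bP^1$-factor transforms by $(y_1,y_2)\mapsto (ay_1+by_2, cy_1+dy_2)$, which is exactly the standard linear $\PGL_2$-action pulled back through the projection $\Aut(X)\to\PGL_2$ of Proposition~\ref{prop:planeauto}. Thus $\tilde X$ is a $\bP^2$-bundle over $\bP^1$ — namely $\bP(\cE)$ with $\cE = \cO\oplus\cO\oplus\cO$ the trivial rank-$3$ bundle — and this bundle is $G$-linearized: the action on the total space of $\cE = \bA^3_{z_1,z_2,z_3}\times\bP^1$ is the evident linear one, $z\mapsto M(g)\cdot z$ where $M(g)$ depends linearly (affinely in $\alpha,\beta$) on the entries. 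Hence I can invoke Proposition~\ref{prop:25}: the induced $G$-action on the base $B = \bP^1$ is a linear (hence linearizable) $\PGL_2$-action, and I only need it to be generically free for the "moreover" clause. For any finite $G\subseteq \Aut(X)$ whose image in $\PGL_2$ acts generically freely on $\bP^1$ — which holds for every finite subgroup of $\PGL_2$ acting on $\bP^1$ — Proposition~\ref{prop:25} immediately gives that the $G$-action on $\tilde X = \bP_{\bP^1}(\cE)$ is linearizable, and therefore so is the $G$-action on $X$.

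The only genuine obstacle is the case where the image $\bar G$ of $G$ in $\PGL_2$ does \emph{not} act generically freely on $\bP^1$ — i.e. when $\bar G$ is small enough that a generic point of $\bP^1$ has nontrivial stabilizer; for $\bar G$ a finite subgroup of $\PGL_2$ acting on $\bP^1$ this never happens (every nontrivial element of $\PGL_2$ fixes only finitely many points), so in fact this obstacle is vacuous — but one must also rule out $\bar G$ being trivial while $G$ is not, i.e. $G \subseteq H$. In that case $G$ acts on $\tilde X = \bP^1\times\bP^2$ trivially on the $\bP^1$-factor and via the unipotent-times-torus group $H$ on the $\bP^2$-factor; here $G$ is a finite subgroup of $H \simeq k^2\rtimes(\text{scalars})$, so $G$ is cyclic (generated by a diagonalizable element fixing a point of $\bP^1$ and acting with finite order on $\bP^2$), and a finite cyclic group acting on $\bP^1\times\bP^2$ with a fixed point on the $\bP^1$-factor restricts to a linearizable action on that $\bP^2$-fiber times a trivial $\bP^1$, which is linearizable directly (a cyclic group action on any $\bP^n$ is linear after an equivariant resolution/the no-name lemma, or just because it already acts linearly here). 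So I would split into the two cases $\bar G \neq 1$ (apply Proposition~\ref{prop:25} as above) and $\bar G = 1$ (handle the cyclic $H$-case by hand), and conclude linearizability of the $G$-action on $X$ in all cases.
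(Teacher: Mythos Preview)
Your approach projects $\tilde X=\bP^1_{y_1,y_2}\times\bP^2_{z_1,z_2,z_3}$ onto the \emph{first} factor and tries to apply Proposition~\ref{prop:25} with base $B=\bP^1$. The paper does the opposite: it projects onto $\bP^2_{z_1,z_2,z_3}$ and uses that the $\Aut(X)$-action on that $\bP^2$ is faithful (this is exactly the embedding of $\Aut(X)$ into $\GL_3$ described before the theorem), so every finite $G\subseteq\Aut(X)$ acts generically freely on $\bP^2$ and Proposition~\ref{prop:25} applies in one stroke.

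Your argument has a genuine gap. Proposition~\ref{prop:25} requires the $G$-action on the base to be generically free, not merely the action of the image $\bar G\subseteq\PGL_2$. You write that ``for $\bar G$ a finite subgroup of $\PGL_2$ acting on $\bP^1$ this never happens'', but that is the wrong condition: the $G$-action on $\bP^1$ factors through $\bar G$, so its generic stabilizer is exactly $G\cap H$. Whenever $G\cap H\neq 1$, the $G$-action on $\bP^1$ is \emph{not} generically free and Proposition~\ref{prop:25} does not apply along your projection. Such $G$ certainly exist: take $h\in H$ with $a=d=-1$, $b=c=\alpha=\beta=0$ (acting trivially on $\bP^1$, by $(z_1,z_2,z_3)\mapsto(-z_1,-z_2,z_3)$ on $\bP^2$) and $g$ with $a=d=0$, $b=c=1$, $\alpha=\beta=0$; then $G=\langle g,h\rangle\simeq C_2^2$ has $\bar G=C_2\neq 1$ and $G\cap H=\langle h\rangle=C_2\neq 1$. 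Your dichotomy ``$\bar G\neq 1$'' versus ``$\bar G=1$'' misses precisely this intermediate case.

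The fix is simply to switch factors. The formula for the action shows that the second projection $\tilde X\to\bP^2_{z_1,z_2,z_3}$ is also $\Aut(X)$-equivariant, and the kernel of $\Aut(X)\to\Aut(\bP^2)$ is trivial (only the identity matrix in the $\GL_3$ description acts trivially on $\bP^2$). Hence every finite $G$ acts linearly and generically freely on $\bP^2$, the trivial bundle $\bA^2_{y_1,y_2}\times\bP^2\to\bP^2$ is $G$-linearized, and Proposition~\ref{prop:25} gives linearizability immediately with no case analysis.
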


\begin{proof}
We see that the $\Aut(X)$-equivariant birational model $\tilde{X}$ of $X$ is isomorphic to $\bP^1\times \bP^2$, 
with coordinates as in \eqref{eqn:tildex}. 
Any finite subgroup $G\subset \Aut(X)$ acts linearly and generically freely on $\bP^2_{z_1,z_2,x_3}$, and lifts to the vector bundle 
$$
\bA^2_{y_1,y_2}\times \bP^2_{z_1,z_2,x_3} \to \bP^2_{z_1,z_2,x_3}.
$$
By Proposition~\ref{prop:25}, the $G$-action is linearizable.      
\end{proof}

\bibliographystyle{plain}
\bibliography{sing-cube}

\begin{thebibliography}{10}

\bibitem{All}
D.~Allcock.
\newblock The moduli space of cubic threefolds.
\newblock {\em J. Algebr. Geom.}, 12(2):201--223, 2003.

\bibitem{blancfanellibundle}
J.~Blanc, A.~Fanelli, and R.~Terpereau.
\newblock Automorphisms of {$\Bbb P^1$}-bundles over rational surfaces.
\newblock {\em \'Epijournal G\'eom. Alg\'ebrique}, 6:Art. 23, 47, 2022.

\bibitem{blancfanelliconnect}
J.~Blanc, A.~Fanelli, and R.~Terpereau.
\newblock Connected algebraic groups acting on three-dimensional {M}ori
  fibrations.
\newblock {\em Int. Math. Res. Not. IMRN}, (2):1572--1689, 2023.

\bibitem{CGH}
S.~Casalaina-Martin, S.~Grushevsky, K.~Hulek, and R.~Laza.
\newblock Complete moduli of cubic threefolds and their intermediate
  {Jacobians}.
\newblock {\em Proc. Lond. Math. Soc. (3)}, 122(2):259--316, 2021.

\bibitem{CMTZ}
I.~Cheltsov, L.~Marquand, Yu. Tschinkel, and Zh. Zhang.
\newblock Equivariant geometry of singular cubic threefolds {I}{I}, 2024.
\newblock {\tt arXiv:2405.02744}.

\bibitem{CTZ-cubic}
I.~Cheltsov, Yu. Tschinkel, and Zh. Zhang.
\newblock Equivariant geometry of singular cubic threefolds.
\newblock {\em Forum Math. Sigma}, 13:Paper No. e9, 2025.

\bibitem{CTZ-burk}
I.~Cheltsov, Yu. Tschinkel, and Zh. Zhang.
\newblock Equivariant geometry of the {S}egre cubic and the {B}urkhardt
  quartic.
\newblock {\em Selecta Math. (N.S.)}, 31(1):Paper No. 7, 36, 2025.

\bibitem{CTZ-uni}
I.~Cheltsov, Yu. Tschinkel, and Zh. Zhang.
\newblock Equivariant unirationality of {F}ano threefolds, 2025.
\newblock {\tt arXiv:2502.19598}.

\bibitem{Colliot-P}
J.-L. Colliot-Th\'{e}l\`ene and A.~Pirutka.
\newblock Certaines fibrations en surfaces quadriques r{\'e}elles, 2024.
\newblock {\tt arXiv:2406.00463}.

\bibitem{CTS}
J.-L. Colliot-Th\'{e}l\`ene and P.~Salberger.
\newblock Arithmetic on some singular cubic hypersurfaces.
\newblock {\em Proc. London Math. Soc. (3)}, 58(3):519--549, 1989.

\bibitem{coray}
D.~F. Coray and M.~A. Tsfasman.
\newblock Arithmetic on singular {D}el {P}ezzo surfaces.
\newblock {\em Proc. London Math. Soc. (3)}, 57(1):25--87, 1988.

\bibitem{DR}
A.~Duncan and Z.~Reichstein.
\newblock Versality of algebraic group actions and rational points on twisted
  varieties.
\newblock {\em J. Algebr. Geom.}, 24(3):499--530, 2015.

\bibitem{Kang}
M.~Hajja and M.~Ch. Kang.
\newblock Some actions of symmetric groups.
\newblock {\em J. Algebra}, 177(2):511--535, 1995.

\bibitem{parkpoly}
I.-K. Kim, J.~Park, and J.~Won.
\newblock K-polystability of the {F}irst {S}ecant {V}arieties of {R}ational
  {N}ormal {C}urves.
\newblock {\em Int. Math. Res. Not. IMRN}, (7):rnaf088, 2025.

\bibitem{kollarcubic}
J.~Koll\'ar.
\newblock Unirationality of cubic hypersurfaces.
\newblock {\em J. Inst. Math. Jussieu}, 1(3):467--476, 2002.

\bibitem{LiuXu}
Y.~Liu and C.~Xu.
\newblock K-stability of cubic threefolds.
\newblock {\em Duke Math. J.}, 168(11):2029--2073, 2019.

\bibitem{pinardin}
A.~Pinardin, A.~Sarikyan, and E.~Yasinsky.
\newblock Linearization problem for finite subgroups of the plane {C}remona
  group, 2024.
\newblock {\tt arXiv:2412.12022}.

\bibitem{viktorova}
S.~Viktorova.
\newblock On the classification of singular cubic threefolds, 2025.
\newblock To appear: {\it Trans. Am. Math. Soc,} {\tt arXiv:2304.10452}.

\bibitem{yokocub}
M.~Yokoyama.
\newblock Stability of cubic 3-folds.
\newblock {\em Tokyo J. Math.}, 25(1):85--105, 2002.

\end{thebibliography}

\end{document}